\def\nd{\noindent}
\newtheorem{theorem}{Theorem}[section]
\newtheorem{lemma}{Lemma}[section]
\newtheorem{proposition}{Proposition}[section]
\newtheorem{remark}{Remark}[section]
\newtheorem{corollary}{Corollary}[section]
\newcommand{\fim}{\hfill\rule{2mm}{2mm}}
\date{}
\begin{document}
\title{
\vspace{0.5in} {\bf\Large Necessary and sufficient conditions for existence of Blow-up solutions for  elliptic  problems in Orlicz-Sobolev spaces}}

\author{
{\bf\large Carlos Alberto Santos}\footnote{Carlos Alberto Santos acknowledges
the support of CAPES/Brazil Proc.  $N^o$ $2788/2015-02$,}\,\, ~~~~~~ {\bf\large Jiazheng Zhou}\footnote{Jiazheng Zhou was supported by CNPq/Brazil Proc. $N^o$ $232373/2014-0$, }\ \ \ 
\hspace{2mm}\\
{\it\small Universidade de Bras\'ilia, Departamento de Matem\'atica}\\
{\it\small   70910-900, Bras\'ilia - DF - Brazil}\\
{\it\small e-mails: csantos@unb.br,
jiazzheng@gmail.com }\vspace{1mm}\\
{\bf\large  Jefferson Abrantes Santos}\footnote{Jefferson Abrantes was Santos Partially supported by CNPq-Brazil grant Casadinho/Procad 552.464/2011-2.}\ \ \ 
\hspace{2mm}\\
{\it\small Universidade Federal de Campina Grande, Unidade Acad\^emica de Matem\'atica}\\
{\it\small  58109-970, Campina Grande - PB - Brazil}\\
{\it\small e-mails:  jefferson@mat.ufcg.edu.br }\vspace{1mm}}

\date{}
\maketitle \vspace{-0.2cm}

\begin{abstract}
This paper is principally devoted to revisit the remarkable works of Keller and Osserman  and generalize some previous results related to the those for the class of quasilinear elliptic problem
$$
 \left\{
\begin{array}{l}
 {\rm{div}} \left( \phi(|\nabla u|)\nabla u\right)  = a(x)f(u)\quad \mbox{in } \Omega,\\
u\geq0\ \ \mbox{in}\ \Omega,\ \ u=\infty\ \mbox{on}\ \partial\Omega,
\end{array}
\right.
$$
where either $\Omega \subset \mathbb{R}^N$ with  
$N \geq 1$ is a smooth bounded domain or $\Omega = \mathbb{R}^N$. The function $\phi$  includes  special cases appearing  in mathematical models in  nonlinear elasticity, plasticity, generalized Newtonian fluids, and in quantum physics.  The proofs are based on comparison principle, variational methods and topological arguments on the Orlicz-Sobolev spaces.
\end{abstract}

\nd {\it \footnotesize 2012 Mathematics Subject Classifications:} {\scriptsize  35A15,  35B44.  35H30 }\\
\nd {\it \footnotesize Key words}: {\scriptsize Orlicz-Sobolev spaces, Blow up solutions, Quasilinear equations.}

\section{Introduction}
\def\theequation{1.\arabic{equation}}\makeatother
\setcounter{equation}{0}

In this paper, let us consider the problems
\begin{equation}\label{P}
 \left\{
\begin{array}{l}
 \Delta_\phi u=a(x)f(u)\ \ \mbox{in}\ \Omega,\\
u\geq0\  \mbox{in}\ \Omega,\  u=\infty\ \mbox{on}\ \partial\Omega,
\end{array}
\right.
\end{equation}
 and \begin{equation}\label{P1}
 \left\{
\begin{array}{l}
 \Delta_\phi u=a(x)f(u)\ \ \mbox{in}\ \mathbb{R}^N,\\
u>0\  \mbox{in}~\mathbb{R}^N,\  u(x)\stackrel{\left|x\right|\rightarrow \infty}{\longrightarrow} \infty,
\end{array}
\right.
\end{equation}
where $\Omega \subset \mathbb{R}^N$ with  $N\geq 1$ is a smooth bounded domain,  $f$ is a continuous function that satisfies $f(0)=0$, $f(s)>0$ for $s>0$, the assumptions on $a(x)$  will be fixed later on, and $\Delta_{\phi}u=div(\phi(|\nabla u|)\nabla u)$ is 
called the $\phi\!-\!Laplacian$ operator, where the $C^1$-function $\phi:(0,+\infty)\to (0,+\infty)$ satisfies:
\begin{enumerate}
\item [\bf{$(\phi)_1$:}] $(\phi(t)t)'>0$ for all $t>0$;
\item [\bf{$(\phi)_2$:}] there exist $l, m>1$ such that $$l\leq\frac{\phi(t)t^2}{\Phi(t)}\leq m~\mbox{for all}~t>0,$$
where $\Phi(t)=\int_0^{|t|}\phi(s)sds$, $t \in \mathbb{R}$;
\item [\bf{$(\phi)_3$:}] there exist $l_1,\ m_1>0$ such that
$$l_1\leq\frac{\Phi''(t)t}{\Phi'(t)}\leq m_1~\mbox{for all}~t>0.$$
\end{enumerate}

Under the {$(\phi)_1$}-{$(\phi)_3$} hypotheses, we have an wide class of $\phi-Laplacian$ operators, for instance: 
\begin{enumerate}
\item [(1)] $\displaystyle\phi(t)=2$, $t>0$. So, $\Delta_{\phi}u=\Delta u$  is the $Laplacian$ operator,
\item [(2)] $\displaystyle\phi(t)=p \vert t \vert^{p-2}$, $t>0$ and $p>1$. In this case, $\Delta_{\phi}u=\Delta_{p}u$  is called the $p\!-\!Laplacian$ operator,
\item [(3)] $\displaystyle\phi(t)=p \vert t \vert^{p-2} + q  \vert t \vert^{q-2}$, $t>0$ and $1<p<q$. The $\Delta_{\phi}u=\Delta_{p}u + \Delta_{q}u$ is called as $(p\&q)\!-\!Laplacian$ operator and it appears in quantum
physics \cite{bfp},
\item [(4)] $\displaystyle\phi(t)= 2\gamma (1+t^2)^{\gamma-1}$, $t>0$ and $\gamma >1$. With this $\phi$, the $\Delta_{\phi}$ operator models problems in nonlinear elasticity problems \cite{naru},
\item [(5)] $\displaystyle\phi(t)= \gamma\frac{(\sqrt{ (1+t^2)}-1)^{\gamma-1}}{\sqrt{1+t^2}}$, $t>0$ and $\gamma \geq 1$. It appears in models of nonlinear elasticity. See, for instance, \cite{dac} for $\gamma =1$ and \cite{naru1} for $\gamma>1$,
\item [(6)] $\displaystyle\phi(t)= \frac{p t^{p-2}(1+t)\ln(1+t) + t^{p-1}}{1+t}$, $t>0$ and $(-1 + \sqrt{1+4N})/2>1$ appears in plasticity problems \cite{naru}.
\end{enumerate} 

 The problems (\ref{P}) (blow-up on the boundary) and (\ref{P1}) also model problems that appear in the theory of automorphic functions, Riemann surfaces, population dynamics, subsonic motion of a gas, non-Newtonian fluids, non-Newtonian filtration as well as in the theory of the electric potential in a glowing hollow metal body.

Researches related to Problem (\ref{P}) was initiated with the case $\phi(t)=2$, $ a = 1$, and $f(u) = exp(u)$ by Bieberbach \cite{bieb} (if $N = 2$) and Rademacher \cite{radema} (if $N = 3$). Problems of this type arise in Riemannian geometry, namely if a Riemannian metric of the form $|ds|^2 = exp(2u(x))|dx|^2$ has constant Gaussian curvature $-c^2$, 
then $\Delta u = c^2 exp(2u)$. Lazer and McKenna \cite{lm} extended the results of Bieberbach and Rademacher for bounded domains in $\mathbb{R}^N$ satisfying a uniform external sphere condition and for exponential-type nonlinearities.

Still for $\phi(t)\equiv 2$, a remarkable development in the study of problem  (\ref{P}) is due to Keller \cite{K} and Osserman \cite{O} that in 1957 established necessary and sufficient conditions for existence of solutions for the problems
$$
{(I)}:~~
\left\{
\begin{array}{l}
 \Delta u=f(u)\ \ \mbox{in}\ \Omega,\\
u\geq 0\  \mbox{in}\ \Omega,\  u=\infty\ \mbox{on}\ \partial\Omega, 
\end{array}
\right.
~~~~~~{(II)}:~~\left\{
\begin{array}{l}
 \Delta u=f(u)\ \ \mbox{in}\ \mathbb{R}^N,\\
u\geq 0\  \mbox{in}\ \Omega,\  u(x)\stackrel{\left|x\right|\rightarrow \infty}{\longrightarrow} \infty,
\end{array}
\right.
$$
where $f$ is a non-decreasing  continuous function. Keller established that
\begin{eqnarray}
\label{deff}
\displaystyle\int_1^{+\infty} \frac{dt}{\sqrt{F(t)}}<+\infty,~\mbox{where}~F(t)=\int_0^tf(s)ds,~t>0
\end{eqnarray}
is a  {sufficient} condition for the problem $(I)$ to have a solution and $(II)$ to have no solution. Besides this, Keller showed that $(II)$ has radially symmetric solutions if, and only if, $f$ does not satisfies (\ref{deff}). In this same year, Osserman proved this same result for sub solutions in $(II)$. 
After these works, (\ref{deff}) has become well-known as the $Keller-Osserman$ condition for $f$.

 The attention of researchers has turned in considering non-autonomous potentials in $(I)$ and $(II)$, that is, particular operators of (\ref{P}). One goal has been to enlarge the class of terms $a(x)$ that still assures existence or nonexistence of solutions for particular cases of (\ref{P}). Another branch of attention of researchers has been in the direction to extend the class of operators in the
problems $(I)$ or $(II)$. For instance, Mohammed in \cite{mo} and Radulescu at all in \cite{radulescu} have considered the problem (\ref{P}) with $\phi(t)=p\vert t \vert^{p-2}$, $t>0$ with $p>1$ and $\Omega$ a bounded domain. Recently, Zhang in \cite{zhang} considered the $p(x)\!-\!Laplacian$ operator in the problem 
(\ref{P}). About $\Omega=\mathbb{R}^N$, the problem (\ref{P}) was considered in \cite{drissi} and references therein with $\phi(t)=p\vert t \vert^{p-2}$, $t>0$ and $2 \leq p \leq N$.

Before doing an overview about these classes of problems, we set that a solution of (\ref{P}) (or (\ref{P1})) is a non-negative function $u\in C^1(\Omega)$ (or a positive function $u\in C^1(\mathbb{R}^N))$ such that $u=\infty$ on $\partial\Omega$, that is, $u(x) \to \infty$ as $d(x)=\inf\{\Vert x-y \Vert~/~y \in \partial\Omega\}\to 0$ (or $u \to \infty$ as $\vert x \vert \to \infty$) and
$$\int_{\Omega}\phi(|\nabla u|)\nabla u\nabla{\psi} dx+\int_{\Omega}a(x)f(u)\psi dx=0,$$
holds for all $ \psi\in C_0^{\infty}(\Omega)$ (or $ \psi\in C_0^{\infty}(\mathbb{R}^N)$).
\smallskip

\noindent{\it Overview about $(\ref{P})$.} 
For $\phi(t)\equiv 2$, the question of existence of solutions to (\ref{P}) was investigated in \cite{ab,ba} with $0< a \in C(\overline{\Omega})$  and $f$ a Keller-Osserman function. In \cite{lair}, Lair showed that (\ref{deff}) is a necessary and sufficient condition for $(\ref{P})$ to have a solution under the more general hypothesis on $a$ than previous paper, namely, $a$ satisfying
\begin{enumerate}
\item [\bf{(a):}] $a:\overline{\Omega}\to[0,+\infty)$ is a $c_{\Omega}-positive$ continuous function,
that is, if $a(x_0)=0$ for some  $x_0\in\Omega$, then there exists an open set $O_{x_0}\subset\Omega$ such that $x_0\in O_{x_0} $ and  
$a(x)>0$ for all $x\in\partial {O_{x_0}}$.
\end{enumerate}

Now for $\phi(t)=\vert t \vert^{p-2}$, $t>0$, we believe that the issue of existence of solutions for (\ref{P}) was first studied in \cite{diaz}. There the term $a$ was considered equal $1$. After this work, a number of important papers have been considering issues as existence, uniqueness and asymptotic behavior for different kinds of weight $a$ and nonlinearities  $f$. See, for instance, \cite{mat}, \cite{mo}, \cite{Garcia-Melian},  \cite{radulescu}, and references therein.

Our objective related to the problem (\ref{P}) is two-fold: first, we generalise previous results to the $\phi\!-\!Laplacian$ operator whose appropriate setting is the Orlicz-Sobolev space. The lack of homogeneity of the operator becomes our estimatives very delicate. Another purpose of this work is to establish necessary and sufficient conditions on the term $f$ for existence of solutions for (\ref{P}) for $a$ in a class of potentials given satisfying $\bf{(a)}$. This result is new even for the context of $p$-Laplacian operator.

To do this, let us consider
\smallskip

{\noindent($\underline{f}$): $\displaystyle\liminf_{s\to+\infty}({\inf\{f(t)~/~ t\geq s\}}/{f(s)})>0,$~~~~~~~~
 ($\overline{f}$): $\displaystyle\limsup_{s\to+\infty}({\sup\{f(t)~/~ 0 \leq t\leq s\}}/{f(s)})<\infty$}
\smallskip

\noindent and note that these assumptions are necessary because we require no kind of monotonicity on $f$, that is, if for instance we assume that $f$ is non-decreasing, as is made in the most of the prior works, the $\underline{f}$ and $\overline{f}$ hypotheses immediately are true. 

Besides this, let us consider
\begin{enumerate}
\item [\bf{(F):}]  $\displaystyle\int_1^{+\infty} \frac{dt}{\Phi^{-1}(F(t))}<+\infty$
\end{enumerate}
and  refer to it {\it as $f$ satisfying the $\phi\!\!-\!\!Keller\!\!-\!\!Osserman$ condition} in reason from the $Keller-Osserman$ condition as it is known for the particular case $\phi(t)=2$.

Ou first result is.
 
\begin{theorem}\label{teo}
 Assume that $(\phi)_1-(\phi)_3$ hold and $a$ satisfies $\bf{(a)}$. Then:
 \begin{enumerate}
 \item [$(i)$] if $f$ satisfies $(\underline{f})$ and $\bf{(F)}$, then Problem $(\ref{P})$ admits at least one solution,
 \item [$(ii)$] if $f$ satisfies $(\overline{f})$ and does not satisfy $\bf{(F)}$, then Problem $(\ref{P})$ have no solution.
 \end{enumerate}
\end{theorem}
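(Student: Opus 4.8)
The plan is to obtain both statements from a single one-dimensional mechanism: the integral in \textbf{(F)} measures the radial ``time'' needed for a solution of the associated ODE to reach $+\infty$, so \textbf{(F)} holds exactly when blow-up can occur at a finite distance from the boundary. For part $(i)$ I would first solve, for each $n\in\mathbb{N}$, the finite Dirichlet problem $\Delta_\phi u=a(x)f(u)$ in $\Omega$ with $u=n$ on $\partial\Omega$. Existence of $u_n\in C^{1}(\overline\Omega)$ follows from the sub- and supersolution method in the Orlicz-Sobolev space $W^{1,\Phi}(\Omega)$, with $\underline u=0$ as subsolution and a large constant as supersolution, using $(\phi)_1$--$(\phi)_3$ to guarantee the monotonicity and coercivity of the operator. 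Since $f$ is not assumed monotone, I would run all comparison arguments against the nondecreasing envelopes $\underline f(s)=\inf\{f(t):t\ge s\}$ and $\overline f(s)=\sup\{f(t):0\le t\le s\}$; hypotheses $(\underline f)$ and $(\overline f)$ guarantee $\underline f\asymp f\asymp\overline f$ for large $s$, so that passing from $f$ to either envelope changes neither the validity nor the failure of \textbf{(F)}. With this, $(\phi)_1$ yields $u_n\le u_{n+1}$, so $(u_n)$ is monotone.

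The decisive step is a uniform local upper bound, and this is where \textbf{(F)} is used. On a ball $B_R(x_0)\subset\subset\Omega$ with $a\ge a_0>0$ I would construct a radial supersolution $w(|x-x_0|)$ of $\Delta_\phi w=a_0\underline f(w)$ that blows up on $\partial B_R(x_0)$. Multiplying the radial ODE by $w'$ and integrating, then invoking $(\phi)_2$ to compare $\phi(t)t^2$ with $\Phi(t)$, produces $|w'|\asymp\Phi^{-1}\!\big(cF(w)\big)$, and a second integration shows that such a $w$ exists precisely because $\int^{\infty}dt/\Phi^{-1}(F(t))<\infty$ makes the radial profile attain $+\infty$ at the finite radius $R$. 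Comparison then gives $u_n\le w$ on $B_R(x_0)$, a bound independent of $n$. To cover the points where $a$ vanishes I would use \textbf{(a)}: around such a point $x_0$ the set $O_{x_0}$ has $a>0$ on $\partial O_{x_0}$, and since $\Delta_\phi u_n=af(u_n)\ge0$ makes $u_n$ $\phi$-subharmonic, the maximum principle bounds $u_n$ on $O_{x_0}$ by its values on $\partial O_{x_0}$, which are already controlled by the radial barriers.

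Local $C^{1,\alpha}$ estimates for the $\phi$-Laplacian then let me pass to the limit $u=\lim_n u_n\in C^1(\Omega)$, which solves $(\ref{P})$; from $u\ge u_n=n$ near $\partial\Omega$ one gets $u=\infty$ on $\partial\Omega$, finishing $(i)$. For part $(ii)$ I argue by contradiction: suppose $u$ solves $(\ref{P})$ while \textbf{(F)} fails. By $(\overline f)$ the failure of \textbf{(F)} transfers to $\overline f$, and $a f(u)\le\|a\|_\infty\overline f(u)$ makes $u$ a supersolution of $\Delta_\phi w=\|a\|_\infty\overline f(w)$. Because $\overline f$ violates the $\phi$-Keller-Osserman condition, the entire problem $\Delta_\phi Z=\|a\|_\infty\overline f(Z)$ on $\mathbb{R}^N$ admits radial solutions with arbitrarily large central value (the analogue of the nonexistence half of Keller's theorem). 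Centering such a $Z$ at a fixed $x_0\in\Omega$ with $Z(x_0)>u(x_0)$ and noting $u=\infty\ge Z$ on $\partial\Omega$, the comparison principle gives $u\ge Z$ in $\Omega$, whence $u(x_0)\ge Z(x_0)>u(x_0)$, a contradiction.

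The step I expect to be the main obstacle is the radial barrier analysis of the second paragraph, together with its mirror image for entire solutions in part $(ii)$. Because the $\phi$-Laplacian is not homogeneous, one cannot rescale solutions as for the $p$-Laplacian, so the blow-up rate of $w$ and the global existence of $Z$ must be read off directly from the nonlinear ODE, controlling $|w'|$ through $\Phi^{-1}(F(w))$ at every stage by means of $(\phi)_2$--$(\phi)_3$. Equally delicate is arranging that the two envelopes $\underline f$ and $\overline f$ interface with $(\underline f)$ and $(\overline f)$ so that the \emph{same} integral $\int^{\infty}dt/\Phi^{-1}(F(t))$ governs both the supersolution that yields existence and the entire solution that yields nonexistence, thereby dispensing with the monotonicity of $f$ assumed in the classical Keller-Osserman setting.
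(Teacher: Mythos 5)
Your proposal is correct in outline and, for part (i), it is essentially the paper's own proof: the paper solves the problems with boundary data $k$ via the sub/supersolution Proposition \ref{pro31}, builds the same nondecreasing envelope $\underline{f}(t)=\inf\{f(s):s\ge t\}$ and checks through $(\underline{f})$ and Lemma \ref{lema20} that it inherits \textbf{(F)}, dominates the $u_k$ locally by radial blow-up solutions on small balls (Lemma \ref{teo0}), and treats the zeros of $a$ exactly as you do, via \textbf{(a)} and the comparison Lemma \ref{lema21}. Three points of implementation deserve attention, the last being where your route genuinely diverges. First, since $f$ is not monotone, the monotonicity $u_n\le u_{n+1}$ cannot come from the comparison lemma alone; it comes from feeding $u_n$ back as the subsolution in the construction of $u_{n+1}$. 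Second, the paper does not build the radial barrier by integrating $w'=\Phi^{-1}(cF(w))$ by hand: it produces genuine radial solutions of Dirichlet problems on balls (Corollaries \ref{31} and \ref{32}) and extracts the two-sided estimate (\ref{desige}), then lets the boundary data tend to infinity. If you insist on a hand-built supersolution, note that for an upper barrier the curvature term $\frac{N-1}{r}h(w')$ enters with the unfavorable sign, so you must additionally verify that it is absorbed by $c\,\underline{f}(w)$; this is precisely the verification that (\ref{desige}) packages for you. Third, in part (ii) you invoke entire radial solutions $Z$ of $\Delta_\phi Z=\|a\|_\infty\overline{f}(Z)$ with arbitrarily large $Z(0)$ once \textbf{(F)} fails. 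In the paper that global-existence statement is Lemma \ref{teo51} and is deduced \emph{from} Theorem \ref{teo}(ii), so quoting it here would be circular; you would need to prove it independently from the lower half of (\ref{desige}) (blow-up radius bounded below by $\eta_2^{-1}(c/l_1)\int_\alpha^\infty d\tau/\Phi^{-1}(F(\tau)-F(\alpha))=\infty$). The paper sidesteps this entirely: it fixes one ball $B_R(0)\supset\supset\overline{\Omega}$, solves the Dirichlet problems there with data $k$ (these exist unconditionally), reads off from (\ref{desige}) that $w_k(0)\to\infty$ because $\overline{f}$ fails \textbf{(F)}, and then gets the contradiction by comparing $w_{k_0}$ with the blow-up solution $\omega\le u$ of the problem with nonlinearity $\mathfrak{M}\overline{f}$. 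Both arguments work, but the paper's fixed-ball version needs no global existence theory, whereas yours requires supplying that extra ODE step.
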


To highlight the last theorem and emphasize the importance of hypothesis $(F)$, we state.

\begin{corollary}\label{corol}
 Assume that $(\phi)_1-(\phi)_3$ hold, $a$ satisfies $\bf{(a)}$, and $f$ is a non-increasing function. Then Hypothesis $(F)$ is a necessary and sufficient condition for the problem $(\ref{P})$ has a solution.
\end{corollary}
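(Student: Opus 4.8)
The plan is to read the equivalence off Theorem \ref{teo} by checking which of the auxiliary hypotheses a non-increasing $f$ satisfies, and by determining the status of $\mathbf{(F)}$ for such $f$. Because the standing assumptions force $f(0)=0$ and $f(s)>0$ for $s>0$, the monotonicity can only be read as ``$f$ non-increasing for $s$ large'', i.e. past some $t_0>0$; since $(\underline f)$, $(\overline f)$ and the integral in $\mathbf{(F)}$ are all insensitive to the values of $f$ on the compact set $[0,t_0]$, this is the natural reading and is the one I adopt. First I would record two elementary facts for a function non-increasing on $[t_0,\infty)$: the tail infimum is the limit, $\inf\{f(t):t\ge s\}=L:=\lim_{t\to\infty}f(t)$ for $s\ge t_0$, while the running supremum stabilises, $\sup\{f(t):0\le t\le s\}=M_0:=\max_{[0,t_0]}f<\infty$ for $s\ge t_0$.

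Next I would evaluate the two quotients. For $(\underline f)$ the quotient is $L/f(s)$, and since $f(s)\downarrow L$ one gets $\liminf_{s\to\infty}L/f(s)=1>0$ provided $L>0$; for $(\overline f)$ the quotient is $M_0/f(s)\to M_0/L<\infty$, again provided $L>0$. Thus for $L>0$ both $(\underline f)$ and $(\overline f)$ hold. I would then settle $\mathbf{(F)}$ once and for all: since $f(u)\le f(t_0)$ for $u\ge t_0$, one has $F(t)\le Ct$ for $t$ large, and the growth bounds forced by $(\phi)_2$ (namely $\Phi(\tau)\ge\Phi(1)\tau^{l}$ for $\tau\ge1$, whence $\Phi^{-1}(y)\le C'y^{1/l}$) give $\Phi^{-1}(F(t))\le C''t^{1/l}$ with $1/l<1$; therefore $\int_1^{\infty}dt/\Phi^{-1}(F(t))=+\infty$ and $\mathbf{(F)}$ \emph{fails}. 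Consequently, for a non-increasing $f$ the condition $\mathbf{(F)}$ never holds, the sufficiency half of the asserted equivalence is vacuous, and the entire content of the corollary is that $(\ref{P})$ admits \emph{no} solution. When $L>0$ this is precisely Theorem \ref{teo}$(ii)$, whose hypotheses ``$f$ satisfies $(\overline f)$'' and ``$f$ does not satisfy $\mathbf{(F)}$'' I have just verified, so nonexistence follows and the equivalence ``$\mathbf{(F)}$ holds $\iff$ $(\ref{P})$ has a solution'' reads as ``false $\iff$ false''.

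The remaining and genuinely delicate case is $L=\lim_{s\to\infty}f(s)=0$, where $M_0/f(s)\to\infty$ and $L/f(s)\to0$, so that \emph{both} $(\underline f)$ and $(\overline f)$ break down and Theorem \ref{teo}$(ii)$ is no longer applicable as a black box. Here I would prove nonexistence directly, exploiting monotonicity once more: set $M:=f(t_0)$, so $f(u)\le M$ for every $u\ge t_0$, and dominate $f$ from above by a fixed admissible profile $\overline f$ that is non-decreasing, bounded by $M$, equal to $M$ on $[t_0,\infty)$, and satisfies $\overline f(0)=0$ (for instance $\overline f(u)=M\min\{u/t_0,1\}$); this $\overline f$ obeys $(\overline f)$ and fails $\mathbf{(F)}$ by the computation above, so $\Delta_\phi w=a(x)\overline f(w)$ has no blow-up solution by Theorem \ref{teo}$(ii)$. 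On a boundary neighbourhood $\Omega_{t_0}:=\{x:u(x)>t_0\}$ one has $a(x)f(u)\le a(x)\overline f(u)$, so a putative solution $u$ of $(\ref{P})$ is a subsolution there of the $\overline f$-problem; comparing $u$ against the $\phi$-Osserman upper barrier associated with $\overline f$ yields a finite a priori bound for $u$ up to $\partial\Omega$, contradicting $u=\infty$ on $\partial\Omega$. This barrier comparison on $\Omega_{t_0}$ is the main obstacle: it amounts to reproducing, in the bounded-source regime and under the presence of the possibly vanishing weight $a$ of hypothesis $\mathbf{(a)}$, the one-sided estimate underlying the proof of Theorem \ref{teo}$(ii)$, and it is exactly here that the lack of homogeneity of $\Delta_\phi$, controllable only through the two-sided bounds $(\phi)_2$--$(\phi)_3$, must be handled with care. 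Once nonexistence is established in both regimes $L>0$ and $L=0$, the stated equivalence follows, both of its sides being false for non-increasing $f$.
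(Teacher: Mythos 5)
Your proposal goes wrong at the level of reading the statement, and the repair you then attempt for the hard case is itself flawed. The word ``non-increasing'' in Corollary \ref{corol} is evidently a slip for ``non-decreasing'': a globally non-increasing $f$ is incompatible with the paper's standing assumptions $f(0)=0$ and $f(s)>0$ for $s>0$; the remark immediately following the corollary (that it ``requires that $f$ goes to infinity at infinity in a strong way'') only makes sense for non-decreasing $f$; and the introduction plants exactly the observation needed for the intended proof, namely that for non-decreasing $f$ both $(\underline{f})$ and $(\overline{f})$ hold automatically, since then $\inf\{f(t):t\geq s\}=f(s)=\sup\{f(t):0\leq t\leq s\}$ and both quotients are identically $1$. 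The paper's own proof of the corollary is therefore the two-line deduction from Theorem \ref{teo}: item $(i)$ gives sufficiency of $\mathbf{(F)}$ and item $(ii)$ gives necessity. Your computation that $\mathbf{(F)}$ always fails for an eventually non-increasing $f$ (via $F(t)\leq Ct$ and $\Phi^{-1}(F(t))\leq C''t^{1/l}$ with $l>1$, from Lemma \ref{lema20}) is correct as mathematics, but it shows that under your literal reading the corollary degenerates to ``false $\iff$ false''; that degeneracy should have signalled the typo rather than been adopted as the content of the statement. As it stands, you prove a different (vacuous-equivalence) assertion from the one the paper intends and proves.

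Moreover, even granting your reading, the case $L=0$ is broken. From $f(u)\leq\overline f(u)$ on $\{u>t_0\}$ you get $\Delta_\phi u\leq a(x)\overline f(u)$, which in the convention of Lemma \ref{lema21} (where $\Delta_\phi u\geq B(x,u)$ is the side that ends up \emph{below}) places $u$ on the \emph{super}solution side of the $\overline f$-problem, not the subsolution side; comparison then yields lower bounds for $u$, never the upper bound you want. Worse, the ``$\phi$-Osserman upper barrier associated with $\overline f$'' does not exist: such barriers are constructed precisely from finiteness of the Keller--Osserman integral, i.e.\ from $\mathbf{(F)}$ holding for $\overline f$, which you have just shown fails. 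When $\mathbf{(F)}$ fails the nonexistence mechanism is the opposite one, and it is the one the paper actually uses in the proof of Theorem \ref{teo}$(ii)$: the radial solutions $w_k$ with finite boundary data $k$ on a large ball satisfy $w_k(0)\to\infty$ as $k\to\infty$ by the estimate (\ref{desige}), while a putative blow-up solution would dominate every $w_k$ by Lemma \ref{lema21}, a contradiction at any interior point. Note also that you cannot invoke Theorem \ref{teo}$(ii)$ as a black box for the $\overline f$-equation, since $u$ is only a supersolution of it; you must argue at proof level. The honest repair of your $L=0$ case is simply to rerun the paper's proof of $(ii)$ with $\overline f(t)=\sup\{f(s):0\leq s\leq t\}$: this envelope is continuous, non-decreasing, vanishes only at $0$, and is bounded, so it fails $\mathbf{(F)}$ automatically (no hypothesis $(\overline f)$ is needed), and that proof uses nothing beyond $\Delta_\phi u\leq\mathfrak{M}\,\overline f(u)$ --- no new barrier construction is required.
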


We note that the above Corollary requires that $f$ goes to infinity at infinity in a strong way when $a$ is bounded in $\Omega$. Yet, if instead of this we permit that $a(x)$ goes to infinity at boundary of $\Omega$ in a strong way, we still can have solutions to (\ref{P}) for $f$ going to infinity at infinity in a slowly way. These was showed in \cite{bandle} or in \cite{mohammed} for $\phi(t) \equiv 2$, $\Omega=B_1(0)$ and $a(x)$ a symmetric radially function and  $f$ does not satisfying the Keller-Osserman condition. So, we can infer that $a(x)$ at boundary of $\Omega$ and $f$ at infinity must behave in a inverse way sense to assure existence of solution. The exactly way is an interesting  open issue.
\smallskip

\noindent{\it Overview about $(\ref{P1})$.}
In $\mathbb{R}^N$, the last researches have showed that the existence of solutions for (\ref{P1}) depends on how $a(x)$ goes to $0$ at infinity and $f$ goes to infinity at infinity. In general, it is known that for either ``fast velocities for both" or ``slow velocities for both" are sufficient conditions for (\ref{P1}) has solutions.   Interesting open issues are ``how should behave $a(x)$ and $f$ at infinity in a inverse way to ensure existence of solutions yet?".

Besides this, the existence of solutions for (\ref{P1}) is sensible to the  another measure related to $a(x)$, more exactly, ``how radial is $a(x)$ at infinity?". To understand this and state our results, let us introduce
$$a_{osc}(r)= \overline{a}(r) - \underline{a}(r),~r\geq 0,$$
where
$$\underline{a}(r)=\min\{a(x)~/~ |x|= r\}~~\mbox{and}~~ \overline{a}(r) =\max\{a(x)~/~ |x| = r\},~r\geq 0,$$
and note that $a_{osc}(r)= 0$, $r\geq r_0$ if, and only if, $a$ is symmetric radially in $\vert x \vert \geq r_0$, for some $r_0\geq 0$.

For $\phi(t)\equiv 2$, Lair and Wood in \cite{lairwood} considered $a$ being a  symmetric radially continuous function (that is,  $a_{osc}(r)=0$ for all $r \geq 0$), $f(u)=u^{\gamma}$, $u\geq 0$ with $0 < \gamma \leq 1$ (that is, $f$ does not satisfies $\bf{(F)})$ and showed that problem (\ref{P}) has a solution if, and only if, 
\begin{equation}
\label{conda}
\int_1^{\infty} r a(r) dr = \infty.
\end{equation}

Still in this context, in 2003, Lair \cite{lair3} enlarged the class of potentials $a(x)$ by permitting $a_{osc} $ to assume not identically null values, but not too big ones. More exactly, he assumed
\begin{equation}
\label{condb}\int_0^{\infty}r a_{osc}(r) exp(\underline{A}(r)) dr< \infty,~~\mbox{where}~~ \underline{A}(r)= \int_0^r s\underline{a}(s) ds,~r\geq 0 \end{equation}
and proved that (\ref{P1}) with suitable $f$ that includes $u^{\gamma}$, $0< \gamma \leq 1$, has a solution if, and only if, (\ref{conda}) holds with $\underline{a}$ in the place of $a$.

Keeping us in this context, Mabroux and Hansen in \cite{hansen} improved the above results by consider 
$$\int_0^{\infty}r a_{osc}(r) (1 + \underline{A}(r))^{\gamma/(1-\gamma)} dr< \infty$$
in the place of (\ref{condb}). In the line of the previous results, recently, Rhouma and Drissi \cite{drissi} considered $\phi(t)=\vert t \vert^{p-2}, ~t \in \mathbb{R}$ with $2 \leq p \leq N$, $f$ a differentiable function that includes $u^{\gamma}$ with $0<\gamma \leq 1$, and they established necessary and sufficient conditions for existence 
of solutions for (\ref{P1}) around the term $a$.

Our objective for the problem (\ref{P1}) is a little bit different of the above ones, because we are principally concerned in establishing necessary and sufficient conditions for existence of solutions of (\ref{P1}) around the function $f$, once fixed a potential $a(x)$ in a bigger class than above results. Besides this, we generalise the prior existence results to the context of $\phi-Laplacian$ operator. 

To do this, first we note that  $(\phi)_1$ and 
$(\phi)_2$  permit us to consider $h^{-1}:(0,\infty) \to (0,\infty)$ being the inverse of $ h(t)=\phi(t)t,~t>0.$ So, let us assume
\begin{enumerate}
\item[$\bf{(A_{\rho})}$:]  $\displaystyle\int_1^\infty h^{-1}(\mathcal{A}_{\rho}(s))dr=\infty,~\mbox{where}~\mathcal{A}_{\rho}(s)=s^{1-N}\int_0^st^{N-1}\rho(t)dt,~s>0$
\end{enumerate}
for suitable continuous function $\rho:[0,\infty) \to [0,\infty)$ given, and $\mathcal{F}:(0,\infty) \to (0,\infty)$, defined by $\mathcal{F}(t)=\frac{t}{2}f(t)^{{-1}/{l_1}}$, be a non-increasing and bijective function such that
\begin{enumerate}
\item [\bf{($\mathcal{F}$):}]  $\mathcal{F}$ is invertible and
$$0\leq\overline{H}:=\displaystyle\int_0^\infty\eta_4(\mathcal{A}_{a_{osc}}(t))h^{-1}\Big (f\Big(\mathcal{F}^{-1}
\Big(\int_0^s h^{-1}(\mathcal{A}_{\overline{a}}(t))dt\Big)\Big)\Big)ds<\infty.$$
\end{enumerate}

After this, we state our existence result.

\begin{theorem}\label{coro53}
Assume that $(\phi)_1-(\phi)_3$ hold. Suppose that  $a(x)$  is a non-negative 
function satisfying $\bf{(A_{\underline{a}})}$, $f$ is a non-decreasing function that does not satisfies $\bf{(F)}$ and such that  $\bf{(\mathcal{F})}$ holds. If 
\begin{equation}
\label{hipo0}
h^{-1}(s + t) \leq  h^{-1}(s ) + h^{-1}(t),~\mbox{for all}~ s,t\geq 0,
\end{equation}
 then  there exists a solution $u\in C^1(\mathbb{R}^N)$ of the problem $(\ref{P1})$ satisfying $\alpha\leq u(0) \leq (\alpha + \varepsilon)+ \overline{H}$, for each $\alpha,\varepsilon>0$ given.
\end{theorem}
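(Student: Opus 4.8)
The plan is to exhaust $\mathbb{R}^N$ by the balls $B_n=B_n(0)$, to solve on each $B_n$ an approximate Dirichlet problem by the sub--supersolution method, and then to let $n\to\infty$, trapping every approximating solution between two radial profiles: one built from the smaller weight $\underline a$, which forces the blow-up at infinity through $\bf{(A_{\underline a})}$, and one built from the larger weight $\overline a$ which, together with $\bf{(\mathcal F)}$ and the subadditivity (\ref{hipo0}), controls the value at the origin by $\overline H$. First I would construct, by a monotone iteration on the integral form of the radial operator, the radial functions $V$ and $W$ with $V(0)=W(0)=\alpha$, zero derivative at the origin, and $V'(r)=h^{-1}\big(r^{1-N}\int_0^r t^{N-1}\underline a(t)f(V)\,dt\big)$, and the same identity for $W$ with $\overline a$ in place of $\underline a$; the failure of $\bf{(F)}$ guarantees that both are global. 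Since $f$ is non-decreasing, $s\mapsto a(x)f(s)$ is non-decreasing, so the comparison principle and the sub--supersolution scheme for $\Delta_\phi$ are available, and I would solve $\Delta_\phi u_n=a(x)f(u_n)$ in $B_n$ with $u_n=W(n)$ on $\partial B_n$.

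The roles of the barriers follow from $\underline a\le a\le\overline a$: from $\Delta_\phi V=\underline a f(V)\le a(x)f(V)$, $V$ is a supersolution of $(\ref{P1})$, while $\Delta_\phi W=\overline a f(W)\ge a(x)f(W)$ makes $W$ a subsolution, and comparing the integral formulations (the larger weight yielding the larger profile) gives $W\ge V$. The decisive point is that $V$ diverges: as $V\ge\alpha$ and $f$ is non-decreasing we have $f(V)\ge f(\alpha)>0$, whence $V'(r)\ge h^{-1}(f(\alpha)\mathcal A_{\underline a}(r))$, and the growth control on $h^{-1}$ coming from $(\phi)_2$--$(\phi)_3$ absorbs the constant $f(\alpha)$, so $\bf{(A_{\underline a})}$ forces $V(r)\to\infty$; therefore $W\ge V\to\infty$ as well. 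Since $W$ is a subsolution agreeing with $u_n$ on $\partial B_n$, comparison gives $u_n\ge W$ on $B_n$, so that in the limit $u\ge W$, yielding both the blow-up $u(x)\to\infty$ as $|x|\to\infty$ and the lower bound $u(0)\ge W(0)=\alpha$.

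For the upper bound I would introduce the radial solution $V_n$ of the $\underline a$--problem with $V_n'(0)=0$ and $V_n(n)=W(n)$; being a supersolution that agrees with $u_n$ on $\partial B_n$, it gives $u_n\le V_n$, so it suffices to bound $V_n(0)-\alpha=V_n(0)-W(0)=\int_0^n\big(W'(r)-V_n'(r)\big)\,dr$. Using $W\le V_n$ on $[0,n]$ (the two radial profiles meet at $r=n$) and $\overline a=\underline a+a_{osc}$ one checks that the argument of $h^{-1}$ in $W'$ is dominated by that in $V_n'$ plus $r^{1-N}\int_0^r t^{N-1}a_{osc}(t)f(V_n(t))\,dt$, so the subadditivity (\ref{hipo0}) yields
$$V_n(0)-\alpha\le\int_0^n h^{-1}\Big(r^{1-N}\!\int_0^r t^{N-1}a_{osc}(t)f(V_n(t))\,dt\Big)dr.$$
At this stage I would invoke the a priori control of radial solutions encoded in $\bf{(\mathcal F)}$, re-expressing $f(V_n)$ through the explicit profile $f\big(\mathcal F^{-1}(\int_0^{\,\cdot}h^{-1}(\mathcal A_{\overline a}))\big)$ and estimating $h^{-1}$ of the radial average by means of the auxiliary function $\eta_4$ supplied by $(\phi)_3$, so as to recognise exactly the integrand defining $\overline H$; this gives $V_n(0)\le\alpha+\overline H$, the $\varepsilon$ being absorbed by the approximations.

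Finally I would pass to the limit. The supersolutions $V_n$ converge to a locally finite radial profile (issued from a value $\le\alpha+\overline H$), so $u_n\le V_n$ keeps $\{u_n\}$ equibounded on compact sets; the local $C^{1,\beta}$ regularity for the $\phi$--Laplacian, together with Arzel\`a--Ascoli and a diagonal extraction, then produces $u\in C^1(\mathbb{R}^N)$ solving $(\ref{P1})$ with $u>0$, $u(x)\to\infty$ as $|x|\to\infty$, and $\alpha\le u(0)\le(\alpha+\varepsilon)+\overline H$. The main obstacle is the oscillation estimate of the third paragraph: isolating the $a_{osc}$--contribution and matching it against $\overline H$ is delicate precisely because $\phi$ is not homogeneous, so the elementary inequalities $h^{-1}(cx)\simeq c^{\theta}h^{-1}(x)$ hold only with the nonlinear exponents dictated by $(\phi)_2$--$(\phi)_3$; this is exactly what forces both the subadditivity (\ref{hipo0}) and the invertibility of $\mathcal F$ into the hypotheses.
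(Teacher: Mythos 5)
Your overall architecture coincides with the paper's: your $W$ is exactly the paper's $u_\alpha$ (the radial solution of the $\overline{a}$-equation with center value $\alpha$, global because $\bf{(F)}$ fails, divergent at infinity because of $\bf{(A_{\underline{a}})}$, cf.\ Lemma \ref{teo51}), the upper control comes from radial solutions of the $\underline{a}$-equation, the approximate problems on balls are solved by the sub--supersolution Proposition \ref{pro31} and squeezed between the two profiles, and the solution is obtained by a diagonal limit. The only structural difference is the normalization of the upper profile: you pin $V_n$ at the boundary, $V_n(n)=W(n)$, and try to bound $V_n(0)$ from above, whereas the paper pins $u_\beta$ at the origin, $u_\beta(0)=\beta>\alpha+\overline{H}$, and shows by a first-crossing contradiction (inequality (\ref{wh})) that $u_\alpha<u_\beta$ everywhere. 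These are equivalent packagings, and your reduction $V_n(0)-\alpha=\int_0^n(W'-V_n')\,dr$, the comparison $W\le V_n$, and the use of (\ref{hipo0}) and of $\eta_4$ from Lemma \ref{lema51} are all sound.

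The gap is at the decisive step. Your oscillation bound carries $f(V_n)$ in the integrand, and you then propose to ``re-express $f(V_n)$'' through the profile $f\big(\mathcal{F}^{-1}(\int_0^{\cdot}h^{-1}(\mathcal{A}_{\overline{a}}))\big)$. No such bound for $V_n$ is available, and deriving one would be circular: the profile estimate (the paper's (\ref{estimativa})) comes from the integral identity $u(r)\le u(0)+\eta_4(f(u(r)))\int_0^r h^{-1}(\mathcal{A}_{\overline{a}}(t))dt$ precisely because $u(0)=\alpha$ is a number fixed in advance that can be absorbed for large $r$; for $V_n$ the center value $V_n(0)$ is the very unknown you are estimating, and indeed the claimed inequality $V_n(r)\le\mathcal{F}^{-1}\big(\int_0^r h^{-1}(\mathcal{A}_{\overline{a}}(t))dt\big)$ is false near $r=0$, where the right-hand side tends to $0$ while $V_n(0)\ge\alpha>0$. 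The repair is cheap and turns your proof into the paper's computation: since $f(V_n)\ge f(W)$ on $[0,n]$, you have $V_n'(r)\ge h^{-1}\big(r^{1-N}\int_0^r t^{N-1}\underline{a}(t)f(W(t))dt\big)$, hence by (\ref{hipo0}), monotonicity of $f(W)$ and Lemma \ref{lema51},
$$
W'(r)-V_n'(r)\ \le\ h^{-1}\Big(r^{1-N}\int_0^r t^{N-1}a_{osc}(t)f(W(t))\,dt\Big)\ \le\ \eta_4(\mathcal{A}_{a_{osc}}(r))\,h^{-1}\big(f(W(r))\big),
$$
with $f(W)$, not $f(V_n)$, in the oscillation term; now (\ref{estimativa}) applies to $W$ (whose center value is the fixed $\alpha$, modulo the same large-$r$ proviso the paper itself uses) and the integrand becomes exactly the one defining $\overline{H}$, giving $V_n(0)\le\alpha+\overline{H}$. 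This is precisely the chain the paper runs after (\ref{wh}), with $W,V_n$ in place of $u_\alpha,u_\beta$ and $[0,n]$ in place of $[0,S(\beta_0)]$. One further consequence: your final limit step asserts without proof that the $V_n$ stay locally finite; once the corrected bound is in hand, compare every $V_n$ (hence every $u_n$) with the single global radial $\underline{a}$-solution with center value $(\alpha+\varepsilon)+\overline{H}$ --- which is exactly the paper's $u_\beta$ --- to get the local equiboundedness needed for Arzel\`a--Ascoli.
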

\begin{remark} About the above hypotheses, we have:
\item [$i)$] the hypothesis $(\ref{hipo0})$ is satisfied for $\Delta_p$-laplacian operator with $2 \leq p < \infty$,
\item [$ii)$] the hypothesis $\bf{(\mathcal{F})}$ is trivially satisfied for a radial functions $a(x)$ at infinity. In particular, if $a$ symmetric radially, then $\overline{H}=0$, 
\item [$iii)$] as showed in $\cite{hansen}$ for the particular case $\phi(t)=2$, we can not ensure that the problem $(\ref{P1})$ has solution, if we remove the hypothesis  $\bf{(\mathcal{F})}$,
\item [$iii)$]  If $f(s)=s^{\gamma}$, $s>0$ with $0<\gamma< l_1$, then $\mathcal{F}(s)= s^{(l_1 -\gamma)/l_1}/2$, $s>0$.
\end{remark}

Our next goal is to establish necessary conditions, around on $f$, for existence of solutions for (\ref{P1}), once fixed a potential $a$ satisfying  $(A_{\overline{a}})$. To do this, let us consider the problem
\begin{equation}
\label{prob2}
\left\{
\begin{array}{c}
 \Delta_{\phi}u=\overline{a}(|x|)f(u)~\mbox{in}~ \mathbb{R}^N,\\
u\geq0~\mbox{in}~ \mathbb{R}^N,\ u(x)\stackrel{\left|x\right|\rightarrow \infty}{\longrightarrow} \infty,
\end{array}
\right.
\end{equation}
and denote by $A=\sup\mathbb{A}$, where
$$\mathbb{A}=\{\alpha>0~/~ (\ref{prob2})\ \mbox{has a radial solution with}\ u(0)=\alpha\}.$$

So, we have.
\begin{theorem}\label{teo52}
Assume that $(\phi)_1-(\phi)_3$ hold, $a(x)$ is a non-negative continuous function $($not necessarily radial$)$ satisfying  $\bf{(A_{\overline{a}})}$, and $f$ is a non-decreasing function. If $(\ref{P1})$ admits a positive solution, then   $A>0$ and $(0,A)\subset\mathbb{A}$. Moreover, $A=\infty$ if, and only if, $f$ does not satisfies  $\bf{(F)}$.
\end{theorem}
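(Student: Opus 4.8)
The plan is to study \eqref{prob2} through its radial reduction and a shooting argument in the initial height $\alpha=u(0)$. A radial solution solves the singular initial value problem
\[
(r^{N-1}h(u'))'=r^{N-1}\overline{a}(r)f(u),\quad u(0)=\alpha,\ u'(0)=0,
\]
equivalently $h(u'(r))=r^{1-N}\int_0^r t^{N-1}\overline{a}(t)f(u(t))\,dt$. By $(\phi)_1$ the map $h$ is an increasing homeomorphism, so a fixed point argument in this integral form gives a unique nondecreasing local solution $u_\alpha$ on a maximal interval $[0,R_\alpha)$, with either $R_\alpha=\infty$ or $u_\alpha(r)\to\infty$ as $r\to R_\alpha^-$. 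I would first record two facts. First, if $R_\alpha=\infty$ then $u_\alpha$ is automatically a large solution: from $f(u)\geq f(\alpha)>0$ one gets $u_\alpha'(r)\geq h^{-1}(f(\alpha)\mathcal{A}_{\overline{a}}(r))$, and after absorbing the constant $f(\alpha)$ by the power-type bounds on $h^{-1}$ coming from $(\phi)_2$, hypothesis $(\mathbf{A}_{\overline{a}})$ forces $\int_1^\infty u_\alpha'\,dr=\infty$, so $u_\alpha\to\infty$; hence $\mathbb{A}=\{\alpha>0:R_\alpha=\infty\}$. Second, for $0<\alpha_1<\alpha_2$ the monotonicity of $f$ and of $h^{-1}$ yields $u_{\alpha_1}\leq u_{\alpha_2}$ wherever both are defined, so $\alpha_2\in\mathbb{A}\Rightarrow\alpha_1\in\mathbb{A}$; thus $\mathbb{A}$ is an interval with left endpoint $0$ and $(0,A)\subset\mathbb{A}$.

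To obtain $A>0$ I would invoke the hypothesis that \eqref{P1} has a positive solution $u_0$. Since $a\leq\overline{a}$ and $f\geq0$, we have $\Delta_\phi u_0=a f(u_0)\leq\overline{a}f(u_0)$, so $u_0$ is a supersolution of the $\overline{a}$-equation; moreover $u_0\in C^1(\mathbb{R}^N)$ is finite-valued and, because $u_0\to\infty$, it attains a positive minimum $c_0:=\min_{\mathbb{R}^N}u_0>0$. Fixing $\alpha\in(0,c_0)$ and comparing the radial solution $u_\alpha$ (a subsolution) with $u_0$ on balls $B_R$, the comparison principle for $\Delta_\phi$ gives $u_\alpha(|x|)\leq u_0(x)$ as long as the boundary inequality persists; a continuation argument on the touching set, using a Hopf-type boundary lemma for $\Delta_\phi$ to reopen strict inequality, upgrades this to $u_\alpha(|x|)\leq u_0(x)$ for all $|x|<R_\alpha$. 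As $u_0$ is finite at every point, $u_\alpha$ cannot blow up at finite radius, so $R_\alpha=\infty$ and $\alpha\in\mathbb{A}$; therefore $A\geq c_0>0$.

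For the equivalence I would use Keller--Osserman-type energy estimates. Setting $\Psi(t):=th(t)-\Phi(t)$ and multiplying the pointwise inequality $(h(u'))'\leq\overline{a}f(u)$ by $u'\geq0$ gives $\frac{d}{dr}\Psi(u')\leq\overline{a}(r)\frac{d}{dr}F(u)$; since $(\phi)_2$ yields $\Psi(t)\geq(l-1)\Phi(t)>0$, integrating on a putative finite blow-up interval $[0,R_\alpha)$, where $\overline{a}\leq C<\infty$, produces $u_\alpha'(r)\leq C'\Phi^{-1}(F(u_\alpha(r)))$ and hence
\[
\int_\alpha^{u_\alpha(r)}\frac{ds}{\Phi^{-1}(F(s))}\leq C'\,r.
\]
If $f$ does not satisfy $(\mathbf{F})$ the left side diverges as $u_\alpha\to\infty$, contradicting $R_\alpha<\infty$; so every $\alpha$ is global, $\mathbb{A}=(0,\infty)$ and $A=\infty$. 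Conversely, if $f$ satisfies $(\mathbf{F})$, I would choose an interval $[r_1,r_2]$ on which $\overline{a}\geq c_1>0$ (available since $(\mathbf{A}_{\overline{a}})$ forces $\overline{a}\not\equiv0$) and compare $u_\alpha$ from below with the radial constant-weight solution $v_\alpha$ of $(r^{N-1}h(v'))'=c_1 r^{N-1}f(v)$ on $[r_1,r_2]$ sharing the same geometry, with $v_\alpha(r_1)=u_\alpha(r_1)$ and $v_\alpha'(r_1)=0\leq u_\alpha'(r_1)$; since $\overline{a}\geq c_1$ there, comparison gives $u_\alpha\geq v_\alpha$. The $\phi$-Keller--Osserman bound shows the blow-up radius of $v_\alpha$ tends to $0$ as its initial height $u_\alpha(r_1)\geq\alpha$ grows, so for large $\alpha$ the barrier $v_\alpha$, and with it $u_\alpha$, blows up before $r_2$; thus $\alpha\notin\mathbb{A}$ and $A<\infty$.

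The step I expect to be the main obstacle is the proof that $A>0$, namely converting the finite, \emph{non-radial} barrier $u_0$ into a pointwise bound on the radial $u_\alpha$ up to its maximal radius: the comparison on a fixed ball is routine, but justifying the continuation past the first touching radius requires a strong comparison or Hopf lemma for the inhomogeneous operator $\Delta_\phi$, whose lack of homogeneity makes the estimates delicate, or else replacing $u_0$ by a genuine radial supersolution such as $r\mapsto\max_{|x|\leq r}u_0(x)$ and verifying its admissibility. The energy estimates of the last paragraph, by contrast, are routine once the constant-absorption inequalities for $h^{-1}$ and $\Phi^{-1}$ supplied by $(\phi)_2$--$(\phi)_3$ are in hand.
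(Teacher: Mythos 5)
Your shooting setup, the dichotomy between global existence and finite-radius blow-up, the use of $(A_{\overline{a}})$ to upgrade global radial solutions to large ones, and the monotonicity in $\alpha$ giving $(0,A)\subset\mathbb{A}$ all match the paper's proof in substance (the paper gets these from the arguments of Lemma \ref{teo51}). Your treatment of the equivalence ``$A=\infty$ if and only if $\bf{(F)}$ fails'' is also essentially correct: where the paper routes both directions through Theorem \ref{teo} applied on balls together with Lemma \ref{lema21}, you argue directly with Keller--Osserman energy estimates and a constant-weight radial barrier on an interval where $\overline{a}\geq c_1>0$; the two routes are interchangeable, and yours has the small advantage of not needing hypothesis $\bf{(a)}$ for the weight, which the paper silently assumes when it invokes Theorem \ref{teo}.

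The genuine gap is the step you yourself flagged: $A>0$. Your plan needs $u_\alpha\leq u_0$ up to the maximal radius $R_\alpha$, and the continuation-past-touching argument you sketch requires a strong comparison principle or Hopf-type lemma for $\Delta_\phi$; nothing of that sort is available in the paper (Lemma \ref{lema21} is a weak comparison principle that takes the boundary ordering as an input), and strong comparison for degenerate quasilinear operators is a genuinely delicate issue, not a routine fix. Your fallback barrier $r\mapsto\max_{|x|\leq r}u_0(x)$ does not help either: maxima of supersolutions over spheres or balls are not supersolutions in general. The paper avoids the difficulty entirely by comparing in the \emph{opposite} direction on the maximal ball: if $\Gamma(\alpha)<\infty$, then $u_\alpha=+\infty$ on $\partial B_{\Gamma(\alpha)}(0)$, so the boundary inequality $w\leq u_\alpha$ required by Lemma \ref{lema21} holds for free, and the lemma gives $w\leq u_\alpha$ in all of $B_{\Gamma(\alpha)}(0)$, contradicting $u_\alpha(0)=\alpha<w(0)$; no touching-point analysis occurs. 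Note, however, what this comparison needs: a common nonlinearity $B(x,t)=\rho(|x|)f(t)$ for which $w$ is a \emph{subsolution}, i.e.\ $\rho\leq a$ pointwise; this holds for $\rho=\underline{a}$ and fails for $\rho=\overline{a}$, for which $w$ is only a supersolution --- exactly the object you tried to exploit. Consistently, at this point the paper's proof invokes $(A_{\underline{a}})$ rather than $(A_{\overline{a}})$; its $\overline{a}$/$\underline{a}$ bookkeeping is inconsistent with the statement of the theorem, and what its argument actually establishes is the claim for the radial problem with weight $\underline{a}$. So your instinct that this step is the crux is correct, but the missing idea is not a stronger comparison theorem: it is to run the weak comparison against radial solutions of the $\underline{a}$-equation, where the blow-up boundary makes the comparison trivial; with $\overline{a}$ taken literally as written, neither your argument nor the paper's closes this step.
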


To become clearest our two last Theorems, let us restate them as below.

\begin{corollary}
 Assume that $(\phi)_1-(\phi)_3$ hold. Suppose that  $a(x)$  is a non-negative symmetric radially
function satisfying  $\bf{(A_{a})}$ and $f$ is a non-decreasing function. Then problem $(\ref{P1})$ admits a sequence of symmetric radial solutions $u_k(\vert x \vert)\in C^1(\mathbb{R}^N)$ with $u_k(0)\to\infty$ as 
$k\to\infty$ if, and only if,
  $f$ does not satisfies $\bf{(F)}$. Besides this, $u_{k}^{\prime} \geq 0$ in $[0,\infty)$.
\end{corollary}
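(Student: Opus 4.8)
The plan is to read the corollary as the radial specialisation of Theorems \ref{coro53} and \ref{teo52}, so I would first record the simplifications that occur when $a(x)=a(|x|)$. Here $\overline{a}=\underline{a}=a$ and consequently $a_{osc}\equiv 0$, so the three conditions $\mathbf{(A_{\underline a})}$, $\mathbf{(A_{\overline a})}$ and $\mathbf{(A_a)}$ coincide; moreover, by part $ii)$ of the Remark following Theorem \ref{coro53}, hypothesis $\mathbf{(\mathcal F)}$ holds trivially and $\overline{H}=0$. The only hypothesis of Theorem \ref{coro53} absent from the corollary is the subadditivity (\ref{hipo0}) of $h^{-1}$; I would argue that it enters the proof of Theorem \ref{coro53} solely to control the non-radial part via $h^{-1}(\mathcal{A}_{\overline a})=h^{-1}(\mathcal{A}_{\underline a}+\mathcal{A}_{a_{osc}})\le h^{-1}(\mathcal{A}_{\underline a})+h^{-1}(\mathcal{A}_{a_{osc}})$, an estimate that is vacuous once $a_{osc}\equiv 0$, so that the conclusion of Theorem \ref{coro53} remains available without (\ref{hipo0}).

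For the sufficiency direction ($f$ not satisfying $\mathbf{(F)}\Rightarrow$ a blowing-up sequence) I would first produce a single solution: with all its remaining hypotheses met in the radial case, Theorem \ref{coro53} yields some radial $u\in C^1(\mathbb{R}^N)$ solving (\ref{P1}). Its existence then triggers Theorem \ref{teo52}, which gives $A>0$, $(0,A)\subset\mathbb{A}$, and—because $f$ fails $\mathbf{(F)}$—$A=\infty$. Hence $(0,\infty)\subset\mathbb{A}$, and for every $k$ there is a radial solution $u_k$ with $u_k(0)=k$, so $u_k(0)\to\infty$. The monotonicity $u_k'\ge 0$ follows from the radial form of the equation: integrating $(r^{N-1}h(u_k'))'=r^{N-1}a(r)f(u_k)$ from the origin gives $u_k'(r)=h^{-1}\big(r^{1-N}\int_0^r s^{N-1}a(s)f(u_k(s))\,ds\big)\ge 0$, since the integrand is non-negative and $h^{-1}\ge 0$.

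For the necessity direction (a blowing-up sequence $\Rightarrow f$ fails $\mathbf{(F)}$) I would argue purely through Theorem \ref{teo52}. The existence of even one of the $u_k$ shows that (\ref{P1}) admits a positive solution, so Theorem \ref{teo52} applies and yields $A>0$ together with $(0,A)\subset\mathbb{A}$. Since each $u_k(0)$ lies in $\mathbb{A}$ and $u_k(0)\to\infty$, we obtain $A=\sup\mathbb{A}=\infty$; the final assertion of Theorem \ref{teo52}, namely $A=\infty$ if and only if $f$ does not satisfy $\mathbf{(F)}$, then forces $f$ not to satisfy $\mathbf{(F)}$. This closes the equivalence.

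I expect the main obstacle to be the bookkeeping around (\ref{hipo0}): one must verify that subadditivity of $h^{-1}$ is invoked nowhere else in the proof of Theorem \ref{coro53} than in the oscillation estimate above, so that dropping it is legitimate when $a_{osc}\equiv 0$. Should that inspection fail, the fallback would be to construct the single radial solution intrinsically—via a shooting or monotone-iteration argument for the scalar problem $u'(r)=h^{-1}\big(r^{1-N}\int_0^r s^{N-1}a(s)f(u(s))\,ds\big)$, $u(0)=\alpha$—whose right-hand side never evaluates $h^{-1}$ on a sum, thereby obtaining existence directly and reserving Theorems \ref{coro53}–\ref{teo52} only for the characterisation in terms of $\mathbf{(F)}$.
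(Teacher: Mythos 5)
Your argument is correct, but your sufficiency route is a detour that the paper itself avoids. Section 5 of the paper opens with Lemma \ref{teo51}, stated explicitly as ``a radial version of Theorem \ref{coro53}'' and proved \emph{without} the hypotheses $\mathbf{(\mathcal{F})}$ and (\ref{hipo0}): under $\mathbf{(A_{\rho})}$, $f$ non-decreasing and failing $\mathbf{(F)}$, it yields $\mathbb{A}_{\rho}=(0,\infty)$. Applied with $\rho=a$, this settles sufficiency in one step: for each $k$ take the radial solution $u_k$ with $u_k(0)=k$, and $u_k'\ge 0$ follows from the integral identity $u_k'(r)=h^{-1}\bigl(r^{1-N}\int_0^r s^{N-1}a(s)f(u_k(s))\,ds\bigr)$, exactly as you write. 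Consequently the ``main obstacle'' you anticipate---checking that (\ref{hipo0}) enters the proof of Theorem \ref{coro53} only through the oscillation estimate, and that the structural requirements on $\mathcal{F}$ (bijectivity, monotonicity) become vacuous when $a_{osc}\equiv 0$---never has to be confronted: your fallback (shooting for the scalar ODE) is literally Lemma \ref{teo51}, already proved in the paper. Your verification claim is in fact true (with $\overline{a}=\underline{a}$ the comparison $u_\alpha\le u_\beta$ in the proof of Theorem \ref{coro53} follows from the integral equations and monotonicity of $f$ alone), but invoking Theorem \ref{coro53} to manufacture a single solution and then re-entering through Theorem \ref{teo52} to get $A=\infty$ is heavier than necessary, and it also obliges you to argue separately that the solution produced by Theorem \ref{coro53} is admissible, since that theorem does not assert radial symmetry of its solution. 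Your necessity argument---each $u_k(0)\in\mathbb{A}$ forces $A=\sup\mathbb{A}=\infty$, and the ``moreover'' clause of Theorem \ref{teo52} then excludes $\mathbf{(F)}$---coincides with the paper's intended use of Theorem \ref{teo52} and is correct; note only that one of the $u_k$ serves as the positive solution activating that theorem, which is legitimate because solutions of (\ref{P1}) are positive by definition.
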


\begin{remark}
We emphasize that the equivalence stated in above Corollary is sharp in the sense that we can have solutions for $(\ref{P1})$ with $f$ satisfying  $\bf{(F)}$ and  $a(x)$ as in $\bf{(A_{a})}$. In particular, this shows that $A<\infty$ may occur as well. We quote Gladkov and Slepchenkov $\cite{gladkov}$, where an example was build for the case $\phi(t)=2,~t>0$.
\end{remark}

This paper is organized in the following way. In the section 2, we present the Orlicz and Orlicz-Sobolev spaces  where we work variationally some approximated problems. In the section 3, we present some results of some auxiliary problems. In the   section 4, we completed the proof of Theorems \ref{teo} and Corollary \ref{corol} and in section 5 we prove Theorem \ref{coro53} and \ref{teo52}. 

\section{The Orlicz and Orlicz-Sobolev setting }

In this section, we present an overview about Orlicz and Orlicz-Sobolev spaces, that will be the appropriate settings to deal approximated problems of (\ref{P}) in variational 
way, and also we give some technical results which will  be  need later. For more details about Orlicz and Orlicz-Sobolev spaces, see for instance \cite{rao-ren}. 

A function $M: \mathbb{R} \to [0, +\infty)$ is called an $N$-function if it {is} convex, even, $M(t) =0$ if, and only if, $t=0$,
${M(t)}/{t} \to 0$  as $t\to 0$  and  ${M(t)}/{t} \to +\infty$  as $t\to +\infty$. It is well known that an $N$-function $M$ can be rewritten as
\begin{equation}
\label{nfun}
\displaystyle M(t)=\int_0^{\vert t \vert} m(s)ds,~t\in \mathbb{R},
\end{equation}
where $m:[0,\infty)\to [0,\infty)$ is a right derivative of $M$, non-decreasing, right continuous function, $m(0)=0$, $m(s)>0$ for $s>0$, and $\lim_{s\to \infty}m(s)=\infty$.  Reciprocally, if $m$ satisfies the former properties, then $M$ defined in (\ref{nfun}) is an $N$-function.

For an  $N$-function $M$ and an open set $\Omega \subset \mathbb{R}^N$, the Orlicz class is the set of function defined by
\[
K_M(\Omega) = \left\{ u:\Omega \to \mathbb{R}~/~ \mbox{ $u$ is measurable\ and }  \int_{\Omega} M(u(x))\, dx < \infty \right\}
\]
and the vector space $L^{{M}}(\Omega)$ generated by $K_M(\Omega)$ is called  Orlicz space. When $M$ satisfies the
$\Delta_2$-condition, namely, there exists a constant $k>0$ such that
\[
M(2t) \leq kM(t),~\mbox{for all}~ t\geq 0,
\]
the  Orlicz class  $K_M(\Omega)$ is a vector space, and hence equal to $L^{{M}}(\Omega)$. 

Defining the following norm (Luxemburg norm) on $L^{{M}}(\Omega)$ by
\[
|u|_{M}= \inf\left\{ \lambda >0~/~\int_{\Omega} M\left( \frac{u(x)}{\lambda}\right)\, dx \leq 1 \right\},
\]
we have that the space   $\left(L^{{M}}(\Omega), |\cdot|_{M}\right)$ is a Banach space.  The complement function of $M$ is defined by
\[
\tilde{M}(t) = \sup_{s>0}\left\{ ts - M(s)\right\}~(\mbox{note that}~\tilde{\tilde{M}}=M).
\]

 In the spaces $L^{{M}}(\Omega)$ and $L^{\tilde{M}}(\Omega)$ an extension of H\"{o}lder's inequality holds:
\[
\left|\int_\Omega u(x)v(x)\, dx\right| \leq 2|u|_{M}|v|_{\tilde{M}}, ~\mbox{for all}~ u\in L^{{M}}(\Omega)~\mbox{and}~ v\in L^{\tilde{M}}(\Omega).
\]
As a consequence, to every $\tilde{u} \in L^{\tilde{M}}(\Omega)$  there exists a corresponds $f_{\tilde{u}} \in (L^{\tilde{M}}(\Omega))^*$ such that  $$f_{\tilde{u}}(v) = \int_\Omega \tilde{u}(x)v(x)\, dx,~ v \in L^M(\Omega).$$ Thus, we can define the Orlicz norm on the space  $L^{\tilde{M}}(\Omega)$ by
\[
\|\tilde{u}\|_{\tilde{M}}= \sup_{|v|_{M}\leq 1}\int_\Omega \tilde{u}(x)v(x)\, dx,
\]
and, in a similar way, we can define the Orlicz norm $\|\cdot \|_{{M}}$ on $L^{{M}}(\Omega)$. The norms  $|\cdot |_{{M}}$  and $\|\cdot \|_{{M}}$ are equivalent and satisfy
\[
| u |_{{M}} \leq
\|u \|_{{M}} \leq 2| u |_{{M}}.
\]
It is important to detach that the $L^{{M}}(\Omega)$ is reflexive if and only if $M$ and $\tilde{M}$ satisfy the $\Delta_2$-condition and that
\[
\left( L^{{M}}(\Omega),|\cdot |_{{M}}\right)^* = \left( L^{\tilde{M}}(\Omega),\|\cdot \|_{\tilde{M}}\right)\ \mbox{ and } \
 \left( L^{\tilde{M}}(\Omega),|\cdot |_{\tilde{M}}\right)^* =
\left( L^{{M}}(\Omega),\|\cdot \|_{{M}}\right)
\]
are true.

Now, setting 
 $$W^{1,M}(\Omega)=\Big\{u \in L^{{M}}~/~\exists ~v_i \in L^{{M}}(\Omega)~;\!\int u \frac{\partial \varphi}{\partial x_i}dx = -\!\int\! v_i \varphi dx,~\mbox{for}~i=1, \cdots , N~\mbox{and}~\forall~\varphi \in C_0^{\infty}(\Omega)\Big\},$$
 we have that $W^{1,M}(\Omega)= (W^{1,M}(\Omega),\vert \cdot \vert_{1,M} )$ is a Banach space, where 
\begin{equation}
\label{norma}
\vert u \vert_{1,M} = |u|_M + |\nabla u|_M.
\end{equation}
Besides this, it is well known that $W_0^{1,M}(\Omega)$, denoting the completion of $C_0^{\infty}(\Omega)$ in the norm (\ref{norma}), is a  Banach space as well. It is reflexive if and only if $M$ and $\tilde{M}$ satisfy the $\Delta_2$-condition. Still in this case, we have 
 $$\int_{\Omega}M(u) \leq \int_{\Omega}M(2 d\vert \nabla u \vert)~\mbox{and}~\vert u \vert_{M}\leq 2 d \vert \nabla u \vert_{M}~\mbox{for all}~u \in W_0^{1,M}(\Omega),$$
 where $0<d<\infty$ is the diameter of $ \Omega$. This inequality is well known as Poincar\'e's inequality.  As a consequence of this, we have that $\Vert u \Vert: = \vert \nabla u \vert_{M}$ is an equivalent norm to the norm $\vert u \vert_{1,M}$ on $W_0^{1,M}(\Omega)$. From now on, we consider $\Vert \cdot \Vert$ as the norm on $W_0^{1,M}(\Omega)$.
 
Hereafter, let us assume that $(\phi)_1$ and $(\phi)_2$ hold, that is, $\Phi$ is an $N$-function. We   state three lemmas. Some items of two first ones are due to Fugakai et all \cite[ Lemma 2.1]{naru}.

\begin{lemma}\label{lema20}
Suppose $\phi$ satisfies $(\phi)_1$ and $(\phi)_2$. Considerer
$$
\xi_1(t) = \min\{t^l, t^m\},~  \xi_2(t) =  \max\{t^l, t^m\},~ \eta_1(t) =  \min\{t^{1/l}, t^{1/m~}\},~
\eta_2(t) =  \max\{t^{1/l}, t^{1/m}\},~ t\geq 0.
$$
Then,
\begin{enumerate}
\item [$(i)$] $
\xi_1(\rho)\Phi(t) \leq \Phi(\rho t) \leq \xi_2(\rho)\Phi(t),\quad \mbox{for } \rho, t \geq 0,
$
\item [$(ii)$] $\xi_1(|u|_\Phi )\leq \int_{\Omega} \Phi(|u|)\,dx  \leq \xi_2(|u|_\Phi ),\quad \mbox{for } u \in L^\Phi(\Omega),
$
\item [$(iii)$] $
\eta_1(\rho)\Phi^{-1}(t) \leq \Phi^{-1}(\rho t) \leq \eta_2(\rho)\Phi^{-1}(t),\quad \mbox{for } \rho, t \geq 0,
$
\item [$(iv)$] $L^{\Phi}(\Omega)$, $W^{1,\Phi}(\Omega)$ and $W_0^{1,\Phi}(\Omega)$ are reflexives and separable.
\end{enumerate}
\end{lemma}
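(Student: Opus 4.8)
The plan is to reduce everything to the single differential inequality encoded in $(\phi)_2$. Since $\Phi(t)=\int_0^{|t|}\phi(s)s\,ds$, we have $\Phi'(t)=\phi(t)t$ for $t>0$, so $(\phi)_2$ reads exactly $l\le t\Phi'(t)/\Phi(t)\le m$, i.e. $l/t\le(\log\Phi)'(t)\le m/t$ for all $t>0$. This is the only analytic input needed for (i)--(iii); hypothesis $(\phi)_1$ enters only to guarantee that $\Phi$ is a genuine $N$-function (strictly increasing and convex), so that $\Phi^{-1}$ exists. Note that $(\phi)_3$ is not used here.

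For (i) I would fix $t>0$ and integrate $(\log\Phi)'$ between $t$ and $\rho t$. If $\rho\ge 1$, integrating $l/\sigma\le(\log\Phi)'(\sigma)\le m/\sigma$ over $[t,\rho t]$ gives $l\log\rho\le\log(\Phi(\rho t)/\Phi(t))\le m\log\rho$, i.e. $\rho^l\le\Phi(\rho t)/\Phi(t)\le\rho^m$; since $\rho\ge 1$ these bounds are precisely $\xi_1(\rho)$ and $\xi_2(\rho)$. If $0<\rho<1$, integrating over $[\rho t,t]$ and reversing gives $\rho^m\le\Phi(\rho t)/\Phi(t)\le\rho^l$, which again equals $\xi_1(\rho)\le\,\cdot\,\le\xi_2(\rho)$ because the roles of the exponents swap when the base lies in $(0,1)$. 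The cases $\rho=0$ or $t=0$ are trivial. Part (ii) then follows from (i) by the standard homogenisation trick: for $u\ne 0$ set $\lambda=|u|_\Phi$, use that $\Phi\in\Delta_2$ (established below) to get $\int_\Omega\Phi(|u|/\lambda)\,dx=1$, apply (i) pointwise with $\rho=\lambda$ and $t=|u(x)|/\lambda$, and integrate over $\Omega$.

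For (iii) I would deduce the bounds on $\Phi^{-1}$ directly from (i), the crucial elementary identities being $\xi_1(\eta_2(\rho))=\rho$ and $\xi_2(\eta_1(\rho))=\rho$ for every $\rho\ge 0$ (verified by the same $\rho\ge 1$ / $\rho<1$ case split). To prove $\Phi^{-1}(\rho t)\le\eta_2(\rho)\Phi^{-1}(t)$, since $\Phi$ is increasing it suffices to show $\rho t\le\Phi(\eta_2(\rho)\Phi^{-1}(t))$; but (i) gives $\Phi(\eta_2(\rho)\Phi^{-1}(t))\ge\xi_1(\eta_2(\rho))\,t=\rho t$. The lower estimate $\eta_1(\rho)\Phi^{-1}(t)\le\Phi^{-1}(\rho t)$ is symmetric, using $\Phi(\eta_1(\rho)\Phi^{-1}(t))\le\xi_2(\eta_1(\rho))\,t=\rho t$.

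Finally, (iv) is where the two-sided nature of $(\phi)_2$ really pays off, and I expect the $\tilde\Phi$-side to be the only genuinely non-routine point. From (i) with $\rho=2$ we get $\Phi(2t)\le 2^m\Phi(t)$, so $\Phi\in\Delta_2$; this already yields separability of $L^\Phi(\Omega)$, and of $W^{1,\Phi}(\Omega)$, $W_0^{1,\Phi}(\Omega)$ as subspaces built from finitely many copies of it. For reflexivity the criterion quoted above requires in addition $\tilde\Phi\in\Delta_2$, i.e. $\Phi\in\nabla_2$. I would obtain this from the lower index: (i) gives $\Phi(ct)\ge c^l\Phi(t)$ for $c\ge 1$, so choosing $c=2^{1/(l-1)}>1$ (possible \emph{exactly} because $l>1$) yields $\Phi(t)\le c^{-l}\Phi(ct)\le\tfrac{1}{2c}\Phi(ct)$, which is the $\nabla_2$ condition. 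Hence both $\Phi$ and $\tilde\Phi$ satisfy $\Delta_2$, and reflexivity of $L^\Phi(\Omega)$, $W^{1,\Phi}(\Omega)$ and $W_0^{1,\Phi}(\Omega)$ follows from the characterisations recalled before the lemma. The single point demanding care is precisely the observation that $l>1$ is what makes $\nabla_2$ available; the $N$-function bookkeeping coming from $(\phi)_1$ is needed only to legitimise the equivalence $\tilde\Phi\in\Delta_2\Leftrightarrow\Phi\in\nabla_2$.
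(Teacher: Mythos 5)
Your proof is correct, but it is worth noting that the paper itself offers no proof of this lemma at all: items $(i)$--$(iii)$ are quoted from Fukagai--Narukawa (reference \cite{naru}, Lemma 2.1 there), and $(iv)$ is left to follow from the $\Delta_2$-characterisations of reflexivity and separability recalled earlier in Section 2. What you have done is reconstruct the underlying argument, and the reconstruction is sound: reading $(\phi)_2$ as $l/t\le(\log\Phi)'(t)\le m/t$ and integrating over $[t,\rho t]$ (with the exponent roles swapping for $\rho<1$) is exactly the standard route to $(i)$; your identities $\xi_1(\eta_2(\rho))=\rho$ and $\xi_2(\eta_1(\rho))=\rho$ give $(iii)$ cleanly from $(i)$ plus monotonicity of $\Phi$; and in $(ii)$ you correctly flag the one point most students miss, namely that the equality $\int_\Omega\Phi(|u|/|u|_\Phi)\,dx=1$ in the normalisation step is not free but uses $\Delta_2$ (continuity of the modular). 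Your treatment of $(iv)$ is also the right one and is the part the paper genuinely leaves implicit: $\Phi\in\Delta_2$ from $(i)$ with $\rho=2$, and $\tilde\Phi\in\Delta_2$ via the $\nabla_2$-condition $\Phi(t)\le\frac{1}{2c}\Phi(ct)$ with $c=2^{1/(l-1)}$, where the strict inequality $l>1$ is precisely what makes the choice of $c$ possible; reflexivity then follows from the equivalence quoted before the lemma, and separability of $W^{1,\Phi}(\Omega)$, $W_0^{1,\Phi}(\Omega)$ from embedding into finitely many copies of $L^\Phi(\Omega)$. In short: the paper's ``proof'' is a citation, yours is a complete and correct self-contained argument that matches the one in the cited source.
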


Now, remembering that $h$ denotes $h(t):=\phi(t)t$, $t>0$, we can show that.
\begin{lemma}\label{lema51}
Suppose $\phi$ satisfies $(\phi)_1-(\phi)_3$. If 
$$
\xi_3(t) = \min\{t^{l_1}, t^{m_1}\},~  \xi_4(t) =  \max\{t^{l_1}, t^{m_1}\},~~\eta_3(t) =  \min\{t^{1/l_1}, t^{1/m_1~}\}
$$
$$\mbox{and}~~\eta_4(t) =  \max\{t^{1/l_1}, t^{1/m_1}\},~ t\geq 0,$$
then
\begin{enumerate}
\item [$(i)$] 
$\xi_3(\rho)h(t) \leq h(\rho t) \leq \xi_4(\rho)h(t), ~\mbox{for } \rho, t \geq 0,$
\item [$(ii)$] $
\eta_3(\rho)h^{-1}(t) \leq h^{-1}(\rho t) \leq \eta_4(\rho)h^{-1}(t), ~\mbox{for } \rho, t \geq 0.
$
\end{enumerate}
\end{lemma}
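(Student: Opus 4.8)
The plan is to exploit the fact that $(\phi)_3$ is, after a change of notation, a growth condition on $h$ of exactly the same shape that $(\phi)_2$ is for $\Phi$; consequently the argument producing Lemma \ref{lema20}$(i)$ and $(iii)$ transfers almost verbatim with $(\Phi,l,m)$ replaced by $(h,l_1,m_1)$. Concretely, since $\Phi(t)=\int_0^{|t|}\phi(s)s\,ds$ we have $\Phi'(t)=\phi(t)t=h(t)$ and $\Phi''(t)=h'(t)$; moreover $\phi\in C^1$ together with $(\phi)_1$, which gives $h'(t)=(\phi(t)t)'>0$, shows that $h$ is $C^1$, strictly increasing and positive on $(0,\infty)$, so $h^{-1}$ exists, is increasing and $C^1$. (From $(\phi)_2$ one also gets $h(t)=\phi(t)t\le m\,\Phi(t)/t\to0$ as $t\to0^+$, so setting $h(0)=0=h^{-1}(0)$ is legitimate.) In this language $(\phi)_3$ reads
\[
l_1\leq\frac{h'(t)\,t}{h(t)}\leq m_1,\qquad t>0,
\]
which is the one-variable inequality driving everything, and in particular forces $l_1\le m_1$.

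For part $(i)$ I would rewrite this as $\frac{l_1}{s}\leq\frac{d}{ds}\ln h(s)\leq\frac{m_1}{s}$ and integrate over the interval with endpoints $t$ and $\rho t$. For $\rho\geq 1$, integrating from $t$ to $\rho t$ yields $\rho^{l_1}\leq h(\rho t)/h(t)\leq\rho^{m_1}$; since $l_1\le m_1$ and $\rho\ge1$ this is exactly $\xi_3(\rho)h(t)\leq h(\rho t)\leq\xi_4(\rho)h(t)$. For $0<\rho\leq 1$ I would apply the case just proved to the pair $(1/\rho,\rho t)$, using $h\big((1/\rho)\cdot\rho t\big)=h(t)$, and solve for $h(\rho t)$; this gives $\rho^{m_1}h(t)\leq h(\rho t)\leq\rho^{l_1}h(t)$, and because $\rho\le1$ the min/max reorganize these exponents again into $\xi_3(\rho)$ and $\xi_4(\rho)$. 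The degenerate cases $\rho=0$ or $t=0$ are trivial once $h(0)=0$ is recorded.

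Part $(ii)$ I would obtain from $(i)$ by inversion rather than by re-integrating. Substituting $t=h^{-1}(\tau)$ into $(i)$ and applying the increasing map $h^{-1}$ gives, for every $\rho\geq 0$,
\[
h^{-1}\big(\xi_3(\rho)\tau\big)\leq\rho\,h^{-1}(\tau)\leq h^{-1}\big(\xi_4(\rho)\tau\big).
\]
To read off $h^{-1}(\sigma\tau)$ for a prescribed scale $\sigma\geq 1$, in the left inequality I choose $\rho=\sigma^{1/l_1}\ge1$ so that $\xi_3(\rho)=\rho^{l_1}=\sigma$, obtaining $h^{-1}(\sigma\tau)\leq\sigma^{1/l_1}h^{-1}(\tau)=\eta_4(\sigma)h^{-1}(\tau)$; in the right inequality I choose $\rho=\sigma^{1/m_1}\ge1$ so that $\xi_4(\rho)=\sigma$, obtaining $\eta_3(\sigma)h^{-1}(\tau)=\sigma^{1/m_1}h^{-1}(\tau)\leq h^{-1}(\sigma\tau)$. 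This settles $(ii)$ for $\sigma\geq 1$, and the range $0<\sigma\leq 1$ then follows, as in $(i)$, by applying the $\sigma\ge1$ statement to $(1/\sigma,\sigma\tau)$.

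The only genuinely delicate bookkeeping, and the step I expect to be the main source of slips, is matching $l_1,m_1$ to the correct member of each min/max in the two regimes $\rho\geq 1$ and $\rho\leq 1$: for $\rho\le1$ the roles of $\xi_3,\xi_4$ (and of $\eta_3,\eta_4$) swap, because $\rho^a$ decreases in $a$. Fixing $l_1\leq m_1$ once and for all and using the reciprocal substitution to reduce the $\le1$ case to the $\ge1$ case is what keeps this transparent. No compactness, monotonicity of $f$, or structure of $a$ enters; the lemma is purely a consequence of the scalar differential inequality coming from $(\phi)_3$.
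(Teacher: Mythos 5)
Your proof is correct and complete. The paper in fact states Lemma \ref{lema51} without any proof (it is offered as the analogue of Lemma \ref{lema20}, which is cited from Fukagai--Narukawa), and your argument --- rewriting $(\phi)_3$ as $l_1\leq th'(t)/h(t)\leq m_1$, integrating the logarithmic differential inequality to get $(i)$, and then obtaining $(ii)$ by the substitution $t=h^{-1}(\tau)$ together with the choices $\rho=\sigma^{1/l_1}$, $\rho=\sigma^{1/m_1}$ --- is precisely the standard argument the paper implicitly relies on, with $(h,l_1,m_1)$ playing the role of $(\Phi,l,m)$.
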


The next Lemma is very important in our approach to control solutions of approximated problems. 

\begin{lemma}\label{lema21}
Suppose $\phi$ satisfies $(\phi)_1$, $(\phi)_2$  and $B=B(x,t) \in L_{loc}^{\infty}(\Omega \times \mathbb{R})$ is non-decreasing in $t \in \mathbb{R}$. Considerer $u,v \in C^1(\Omega)$ satisfying
$$
 \left\{
\begin{array}{l}
 \Delta_\phi u \geq B(x, u)\ \ \mbox{in}\ \Omega,\\
 \Delta_\phi v \leq B(x, v)\ \ \mbox{in}\ \Omega.
\end{array}
\right.
$$
If $u \leq v$ on $\partial\Omega$ $($that is, for each $\delta>0$ given there exists a neighborhood of $\partial\Omega$ in what $u < v + \delta$$)$, then $u \leq v$ in $\Omega$.
\end{lemma}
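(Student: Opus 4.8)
The plan is to prove the comparison principle (Lemma~\ref{lema21}) by a contradiction argument built around testing the two differential inequalities against the positive part of $u-v$. First I would reduce the boundary hypothesis to a usable form: since we are told that for each $\delta>0$ there is a neighborhood of $\partial\Omega$ on which $u<v+\delta$, I would work with the function $w_\delta=(u-v-\delta)^+$. By the boundary assumption, $w_\delta$ vanishes in a neighborhood of $\partial\Omega$, so $w_\delta$ has compact support in $\Omega$ and belongs to $W_0^{1,\Phi}(\Omega)$; this legitimizes its use as a test function. The goal is to show $w_\delta\equiv 0$ for every $\delta>0$, whence letting $\delta\to 0$ yields $u\le v$ in $\Omega$.

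Next I would subtract the two weak inequalities. Writing the distributional inequalities $\Delta_\phi u\ge B(x,u)$ and $\Delta_\phi v\le B(x,v)$ in their tested forms against the nonnegative test function $w_\delta$ and subtracting, I expect to arrive at an estimate of the shape
\begin{equation}
\int_{\{u>v+\delta\}}\bigl(\phi(|\nabla u|)\nabla u-\phi(|\nabla v|)\nabla v\bigr)\cdot\nabla(u-v)\,dx
\;\le\;\int_{\{u>v+\delta\}}\bigl(B(x,u)-B(x,v)\bigr)\,w_\delta\,dx.
\end{equation}
On the set $\{u>v+\delta\}$ we have $u>v$, so monotonicity of $B$ in its second argument forces $B(x,u)-B(x,v)\ge 0$; however, the sign that actually helps comes from pairing $w_\delta\ge 0$ with this term on the correct side, so I would be careful to orient the subtraction so that the right-hand side is $\le 0$. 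The left-hand integrand is controlled from below by the strict monotonicity of the vector field $\xi\mapsto\phi(|\xi|)\xi$, which is guaranteed by $(\phi)_1$: the condition $(\phi(t)t)'>0$ is exactly what makes $A(\xi)=\phi(|\xi|)\xi$ a strictly monotone operator, so $(A(\nabla u)-A(\nabla v))\cdot(\nabla u-\nabla v)>0$ wherever $\nabla u\neq\nabla v$. Combining these, the displayed inequality forces the left-hand integrand to vanish a.e.\ on $\{u>v+\delta\}$, hence $\nabla u=\nabla v$ there, so $\nabla w_\delta=0$ a.e.; since $w_\delta\in W_0^{1,\Phi}(\Omega)$, Poincar\'e's inequality (stated in Section~2) gives $w_\delta\equiv 0$.

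The main obstacle I anticipate is the regularity/justification step, not the algebra. The hypotheses only give $u,v\in C^1(\Omega)$ and the differential inequalities in the weak (distributional) sense, so I must verify that $w_\delta=(u-v-\delta)^+$ is an admissible test function and that all integrals are finite: compact support in $\Omega$ (from the boundary condition) plus $C^1$ regularity of $u,v$ and local boundedness of $B$ handle this, but the argument should state explicitly why $w_\delta\in W_0^{1,\Phi}(\Omega)$ and why the chain rule $\nabla w_\delta=\chi_{\{u>v+\delta\}}\nabla(u-v)$ applies. A secondary subtlety is the strict monotonicity inequality for the $\phi$-field: I would record once, as a lemma or inline, that $(\phi)_1$ yields $(A(\xi)-A(\zeta))\cdot(\xi-\zeta)>0$ for $\xi\neq\zeta$ in $\mathbb{R}^N$, deriving it from the scalar monotonicity of $h(t)=\phi(t)t$ together with the convexity of $\Phi$ (equivalently, that $A=\nabla\Phi(|\cdot|)$ is the gradient of a strictly convex function). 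Once this monotonicity is in hand, the passage from ``integral $\le 0$'' to ``integrand $=0$ a.e.'' to ``$w_\delta\equiv 0$'' is routine, and the final limit $\delta\to 0^+$ completes the proof. \fim
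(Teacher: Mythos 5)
Your proof is correct, and its core coincides with the paper's: both hinge on the strict monotonicity of the vector field $\xi\mapsto\phi(|\xi|)\xi$, which the paper records as inequality (\ref{des}). The difference is what happens after that. The paper's proof is two lines long: it asserts (\ref{des}) (from $(\phi)_1$--$(\phi)_2$) and then simply invokes Theorem 2.4.1 of Pucci and Serrin \cite{serrin-pucci}, which is exactly the comparison principle for divergence-structure inequalities with a monotone field and $B$ non-decreasing in $t$. You instead reprove that comparison principle from scratch: test the subtracted weak inequalities against $w_\delta=(u-v-\delta)^+$, which the weak boundary hypothesis makes compactly supported in $\Omega$; use monotonicity of $B$ to make the zero-order term harmless; use strict monotonicity of the field to force $\nabla w_\delta=0$ a.e.; apply Poincar\'e's inequality to get $w_\delta\equiv 0$; and let $\delta\to 0^+$. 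This is in essence the proof of the theorem the paper cites, so the mathematical content is the same; what your version buys is self-containedness and an explicit accounting of where $(\phi)_1$, the monotonicity of $B$, and the weak boundary condition each enter, and it stays within tools the paper already uses (the same $(\cdot)^+$ test-function computation appears inside the proof of Proposition \ref{pro31}). Two details to repair in a final write-up: your displayed inequality is mis-oriented --- as written its right-hand side is $\int(B(x,u)-B(x,v))w_\delta\,dx\geq 0$, which yields nothing; the subtraction in fact leaves $-\int(B(x,u)-B(x,v))w_\delta\,dx\leq 0$ on the right (you flag this yourself, so it is a slip rather than a gap) --- and the compact-support and Poincar\'e steps use that $\Omega$ is bounded, which is indeed the setting in which the paper states and applies the lemma.
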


\begin{proof}
Since $(\phi)_1$ and $(\phi)_2$ hold, we able  to show that 
\begin{equation}\label{des}
\langle \phi(\vert  \nabla u \vert){\nabla u} - \phi(\vert  \nabla v \vert){\nabla v}, 
{\nabla u}- \nabla v\rangle >0~\mbox{for all}~u,v \in C^1(\Omega)~\mbox{with}~u \neq v
\end{equation} 
holds. So, by Theorem 2.4.1 in \cite{serrin-pucci}, it follows the claim.

\fim
\end{proof}

\section{Auxiliary results}

Our principal strategy to show existence of solutions for (\ref{P}) and (\ref{P1}) is to solve  boundary value problems with finite data, to control these solutions and then to get the solution for (\ref{P}) or (\ref{P1}) by a limit process. To do this, first we prove a variational sub and super solution theorem for the problem
\begin{equation}\label{P0}
 \left\{
\begin{array}{l}
 \Delta_\phi u=g(x,u) \ \mbox{in}\ \Omega,\\~u=k \ \mbox{on}\ \partial\Omega,
\end{array}
\right.
\end{equation}
where $ k\geq 0$ is an appropriate real number and $g:\Omega\times\mathbb{R}\to\mathbb{R}$ is a  $C(\bar{\Omega}\times\mathbb{R})$ function.

We define a sub solution of (\ref{P0}) as being a function
$\underline{u}\in W^{1,\Phi}(\Omega)\cap L^\infty(\Omega)$ such that $\underline{u}\leq k $ on $\partial\Omega$ and
$$\int_{\Omega}\phi(|\nabla \underline{u}|)\nabla \underline{u}\nabla\psi dx+\int_{\Omega}g(x,\underline{u})\psi dx\leq0$$
holds for all $\psi\in W_0^{1,\Phi}(\Omega)$ with $\psi\geq0$ in $\Omega$. A super solution is a function $\overline{u}\in W^{1,\Phi}(\Omega)\cap L^\infty(\Omega)$ satisfying the converse above inequalities. So, a solution is a function ${u}\in W^{1,\Phi}(\Omega)\cap L^\infty(\Omega)$ that is simultaneously a sub and a super solution for (\ref{P0}).

\begin{proposition}\label{pro31}
 Suppose that  $(\phi)_1-(\phi)_3$ hold and $\underline{u},\bar{u}\in W^{1,\Phi}(\Omega)\cap L^\infty(\Omega)$ with $\underline{u} \leq \bar{u} $ are sub e super solutions of $(\ref{P0})$, respectively. Then there exists a $u\in W^{1,\Phi}(\Omega)\cap L^\infty(\Omega)$ with $\underline{u}\leq u\leq\bar{u}$ solution  of $(\ref{P0})$. Besides this, if $g \in L^{\infty}(\Omega \times \mathbb{R})$, then $u\in C^{1,\alpha}(\bar{\Omega})$, for some $0<\alpha<1$.
\end{proposition}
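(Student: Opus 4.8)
The plan is to prove Proposition \ref{pro31} by the classical variational method of sub and super solutions adapted to the Orlicz-Sobolev setting. First I would \emph{truncate} the nonlinearity to obtain a functional with good growth. Define the truncation
$$
\tilde{g}(x,s) = \begin{cases} g(x,\overline{u}(x)) & \mbox{if } s > \overline{u}(x),\\ g(x,s) & \mbox{if } \underline{u}(x) \leq s \leq \overline{u}(x),\\ g(x,\underline{u}(x)) & \mbox{if } s < \underline{u}(x), \end{cases}
$$
and set $\tilde{G}(x,s) = \int_0^s \tilde{g}(x,t)\,dt$. Since $\underline{u},\overline{u} \in L^{\infty}(\Omega)$ and $g \in C(\bar{\Omega}\times\mathbb{R})$, the truncated term $\tilde{g}$ is bounded, hence $\tilde{G}$ grows at most linearly in $s$. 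The associated energy functional on the affine space $k + W_0^{1,\Phi}(\Omega)$ is
$$
J(u) = \int_{\Omega} \Phi(|\nabla u|)\,dx + \int_{\Omega} \tilde{G}(x,u)\,dx.
$$

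Next I would establish that $J$ attains a global minimum. Using Lemma \ref{lema20}$(ii)$ together with $(\phi)_2$, the principal part $\int_{\Omega}\Phi(|\nabla u|)\,dx$ controls $\xi_1(\|u\|)$ from below, which grows faster than the linear lower-order term, so $J$ is coercive on $k + W_0^{1,\Phi}(\Omega)$; convexity of $\Phi$ gives weak lower semicontinuity of the gradient term, while the boundedness of $\tilde{g}$ makes the lower-order term weakly continuous via compact embedding. By reflexivity and separability of $W_0^{1,\Phi}(\Omega)$ from Lemma \ref{lema20}$(iv)$, the direct method yields a minimizer $u$, which is a weak solution of $\Delta_{\phi} u = \tilde{g}(x,u)$ with boundary datum $k$.

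The main obstacle, and the crux of the argument, is to show that the minimizer $u$ of the truncated problem actually satisfies $\underline{u} \leq u \leq \overline{u}$ in $\Omega$, so that $\tilde{g}(x,u) = g(x,u)$ and $u$ solves the original problem $(\ref{P0})$. The standard technique is to test the weak formulation with the admissible function $\psi = (u - \overline{u})^+ \in W_0^{1,\Phi}(\Omega)$, which is legitimate because $u \leq k$ and $\overline{u}\geq k$ on $\partial\Omega$ force $\psi$ to vanish on $\partial\Omega$. Subtracting the super-solution inequality tested against the same $\psi$ and using the strict monotonicity inequality $(\ref{des})$ from Lemma \ref{lema21}, together with the fact that $\tilde{g}(x,u) = g(x,\overline{u})$ on the set $\{u > \overline{u}\}$, one deduces that the set where $u > \overline{u}$ has measure zero, hence $u \leq \overline{u}$; the lower bound $u \geq \underline{u}$ follows symmetrically with $\psi = (\underline{u} - u)^+$. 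Here I expect the delicate point to be handling the boundary behaviour and the integration of the monotonicity term carefully, since the lack of homogeneity of $\Delta_{\phi}$ prevents the simpler estimates available for the $p$-Laplacian.

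Finally, for the regularity statement, once $\underline{u} \leq u \leq \overline{u}$ is established and $g \in L^{\infty}(\Omega\times\mathbb{R})$, the right-hand side $g(x,u(x))$ is bounded in $L^{\infty}(\Omega)$. I would then invoke the interior and boundary $C^{1,\alpha}$ regularity theory for $\phi$-Laplacian equations with bounded data (of Lieberman type, applicable under $(\phi)_1$--$(\phi)_3$) to conclude $u \in C^{1,\alpha}(\bar{\Omega})$ for some $0 < \alpha < 1$.
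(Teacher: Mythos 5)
Your proposal is correct and follows essentially the same route as the paper's proof: truncate $g$ between the sub and super solutions, minimize the associated functional using coercivity (via Poincar\'e's inequality and Lemma \ref{lema20}) and weak lower semicontinuity on the reflexive space $W_0^{1,\Phi}(\Omega)$, establish $\underline{u}\leq u\leq\bar{u}$ by testing with $(u-\bar{u})^+$ and invoking the monotonicity inequality (\ref{des}), and obtain $C^{1,\alpha}$ regularity from known results (the paper cites Lemma 3.3 of \cite{naru}, which is of the Lieberman type you describe). The only cosmetic difference is that the paper shifts the unknown by $k$ so as to work with zero boundary data in $W_0^{1,\Phi}(\Omega)$, whereas you work on the affine space $k+W_0^{1,\Phi}(\Omega)$; the two formulations are equivalent.
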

\begin{proof}
 Defining
$$
\mathfrak{h}(x,t)=\left\{
\begin{array}{ll}
g(x,\underline{u}(x)),& \mbox{if}\ t\leq \underline{u}(x)-k,\\
g(x,t+k),& \mbox{if}\ \underline{u}(x)-k\leq t\leq \bar{u}(x)-k,\\
g(x,\bar{u}(x)),& \mbox{if}\ t\geq\bar{u}(x)-k,
\end{array}
\right.
$$
we have that $\mathfrak{h}\in C(\bar{\Omega}\times\mathbb{R})\cap L^{\infty}(\Omega\times\mathbb{R})$. Since,  $(\phi)_1$ and $(\phi)_2$ hold, we have that
$$I(u)=\int_{\Omega}{\Phi}(|\nabla u|)dx+\int_{\Omega}H(x,u)dx, ~u\in W_0^{1,\Phi}(\Omega)$$
is well-defined and $I\in C^1(W_0^{1,\Phi}(\Omega),\mathbb{R})$, where
 $$H(x,t)=\int_0^t\mathfrak{h}(x,s)ds,~t\in \mathbb{R}.$$

Besides this, it follows from Poincar\'e's inequality and Lemma \ref{lema20} that $I$ is coercive, because
$$
\begin{array}{lcl}
I(u)&\geq& \displaystyle\int_{\Omega}\Phi(|\nabla u|)dx-C\int_{\Omega}|u|dx\geq {\xi_1}(\Vert u \Vert)-C|u|_{L^1}\\
 \\
&\geq&\displaystyle\xi_1(\Vert u \Vert)-\bar{C}\Vert u \Vert\to\infty~\mbox{as}~\Vert u \Vert \to \infty,
\end{array}
$$
where $C,\bar{C}>0 $ are real constants. Finally, as $\mathfrak{h}\in  L^{\infty}(\Omega\times\mathbb{R})$, we have that $I$ is  weak s.c.i.
Since $W_0^{1,\Phi}(\Omega)$ is a reflexive space, there exists a
$$u_0\in W_0^{1,\Phi}(\Omega)~~\mbox{such that}~~I^{\prime}(u_0)=0~\mbox{and}~I(u_0)=\min_{v\in W_0^{1,\Phi}(\Omega)}I(v),$$
that is, $u_0$ is a weak solution of 
\begin{equation}\label{PPhi0}
\left\{
\begin{array}{l}
 \Delta_\phi v=\mathfrak{h}(x,v) \ \mbox{in}\ \Omega,\\~v=0 \ \mbox{on}\ \partial\Omega.
\end{array}
\right.
\end{equation}
Now, we show that $\underline{u}-k\leq u_0\leq \bar{u}-k$ a.e. in $\Omega$. In fact,
by taking $\left(u_0-(\bar{u}-k)\right)^+\in W_0^{1,\Phi}(\Omega)$ as a test function and using that $(\bar{u}-k)$ is a super solution of (\ref{PPhi0}), we obtain
\begin{eqnarray*}
\int_{\Omega}\phi(|\nabla u_0|){\nabla u_0}\nabla\left(u_0-(\bar{u}-k)\right)^+dx&=&-\int_{\Omega}\mathfrak{h}(x,u_0)\left(u_0-(\bar{u}-k)\right)^+dx\\
&=&-\int_{\Omega}g(x,\bar{u})\left(u_0-(\bar{u}-k)\right)^+dx\\
&\leq&
\int_{\Omega}\phi({|\nabla\bar{u}|})\nabla\bar{u}\nabla\left(u_0-(\bar{u}-k)\right)^+dx,\end{eqnarray*} that is,

$$\int_{\{u_0>\bar{u}-k\}}\langle\phi(|\nabla u_0|)\nabla u_0-\phi(|\nabla\bar{u}|)\nabla\bar{u},\nabla(u_0-\bar{u})\rangle dx\leq0.$$

So, by using (\ref{des}) we get the claim. In analogue way, we obtain $\underline{u} - k\leq u_0$ in $\Omega$. Setting $u = u_0 + k$, we have that it is a solution of $(\ref{P0})$ with $\underline{u}\leq u \leq \bar{u}$ a.e. in $\Omega$. Finally, the regularity follows from the Lemma 3.3 in \cite{naru}. This ends the proof.
\end{proof}

\fim

Now, setting the reflexive Banach space
$$W_{rad}^{1,\Phi}(B_R(0))=\{u\in W^{1,\Phi}(B_R(0))~/~ u~ \mbox{is symmetric radially}\},$$
where $B_R(x_0)$ stands for the ball in $\mathbb{R}^N$ centred at $x_0$ with radius $R>0$. So, we have the below result.
\begin{corollary}\label{31}
Suppose that  $(\phi)_1-(\phi)_3$ hold and $\underline{u},\bar{u}\in W_{rad}^{1,\Phi}(B_R(0))\cap L^\infty(B_R(0))$, with $\underline{u} \leq \bar{u} $, are sub e super solutions for $(\ref{P0})$, respectively. Then there exists a $u\in W_{rad}^{1,\Phi}(B_R(0))\cap L^\infty(B_R(0))$ with $\underline{u}\leq u\leq\bar{u}$ solution  of $(\ref{P0})$.
\end{corollary}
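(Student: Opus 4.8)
The plan is to transcribe the variational argument of Proposition~\ref{pro31} into the radial setting, the only genuine extra work being to guarantee that a minimizer found inside the radial subspace actually solves the full equation. First I would record that the radial functions of $W_0^{1,\Phi}(B_R(0))$ form a closed linear subspace, call it $W^{1,\Phi}_{0,rad}(B_R(0))$; being a closed subspace of the reflexive, separable space $W_0^{1,\Phi}(B_R(0))$ (Lemma~\ref{lema20}$(iv)$), it is itself reflexive and separable. Since $B_R(0)$ and the barriers $\underline{u},\bar u$ are rotation invariant, the truncation $\mathfrak{h}(\cdot,t)$ built from $g,\underline{u},\bar u$ exactly as in Proposition~\ref{pro31} is radial in $x$ (here I use that the data $g(\cdot,t)$ is radial, as it is in every intended application), so the functional $I(u)=\int_{B_R(0)}\Phi(|\nabla u|)\,dx+\int_{B_R(0)}H(x,u)\,dx$ is invariant under the linear action $(T_g u)(x)=u(g^{-1}x)$, $g\in O(N)$. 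This action is isometric for the norm $\Vert u\Vert=|\nabla u|_\Phi$, because $|\nabla(T_g u)(x)|=|\nabla u(g^{-1}x)|$ and Lebesgue measure is rotation invariant, and its fixed-point set is precisely $W^{1,\Phi}_{0,rad}(B_R(0))$.

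Next I would minimize $I$ over $W^{1,\Phi}_{0,rad}(B_R(0))$. The coercivity and weak lower semicontinuity established in Proposition~\ref{pro31} rest only on $(\phi)_1$-$(\phi)_2$, Poincar\'e's inequality, and the boundedness of $\mathfrak{h}$; they therefore persist verbatim on the closed radial subspace. Reflexivity then delivers a minimizer $u_0\in W^{1,\Phi}_{0,rad}(B_R(0))$, which is automatically a critical point of the restriction $I|_{W^{1,\Phi}_{0,rad}(B_R(0))}$, i.e. $I'(u_0)[\psi]=0$ for every \emph{radial} $\psi$.

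The step I expect to be the real point of the argument is upgrading $u_0$ from a critical point of the restricted functional to a weak solution of the truncated problem (\ref{PPhi0}) tested against \emph{all} $\psi\in W_0^{1,\Phi}(B_R(0))$. For this I would invoke the Principle of Symmetric Criticality of Palais: since $I\in C^1(W_0^{1,\Phi}(B_R(0)),\mathbb{R})$ is invariant under the isometric action of the compact group $O(N)$ whose fixed-point set is exactly $W^{1,\Phi}_{0,rad}(B_R(0))$, every critical point of $I|_{W^{1,\Phi}_{0,rad}(B_R(0))}$ is a critical point of $I$ on the whole space. The only delicate verification is thus that the $O(N)$-action is a well-defined isometry of the Orlicz--Sobolev space with fixed-point set equal to the radial subspace, which the computation above supplies; alternatively, if one wishes to avoid symmetric criticality, the radial minimizer can be recast as the $C^1$ solution of the corresponding one-dimensional $\phi$-Laplacian ODE and checked directly to be a weak solution.

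Finally, the order relation $\underline{u}-k\le u_0\le \bar u-k$ follows as in Proposition~\ref{pro31}: the test functions $(u_0-(\bar u-k))^+$ and $(\underline{u}-k-u_0)^+$ are radial, being built from the radial functions $u_0,\bar u,\underline{u}$, hence admissible in the super/sub-solution inequalities, and (\ref{des}) forces the corresponding level sets to be null. Setting $u=u_0+k$ then yields $u\in W^{1,\Phi}_{0,rad}(B_R(0))\cap L^\infty(B_R(0))$ with $\underline{u}\le u\le\bar u$ solving (\ref{P0}), which is exactly the claim; everything apart from the symmetric-criticality step is a direct transcription of the earlier proof.
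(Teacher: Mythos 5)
Your proposal is correct and follows essentially the same route as the paper: the paper's proof likewise minimizes the truncated functional $I$ over $W_{0,rad}^{1,\Phi}(B_R(0))$, invokes Palais' principle of symmetric criticality to upgrade the radial minimizer to a critical point of $I$ on all of $W_0^{1,\Phi}(B_R(0))$, and then transfers the ordering and the shift $u=u_0+k$ from Proposition~\ref{pro31}. Your explicit verification of the $O(N)$-invariance and isometry (and your remark that $g(\cdot,t)$ must be radial, which the paper leaves implicit) merely fills in details the paper compresses into ``following the same arguments as in the above proof.''
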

\begin{proof} Following the same arguments as in the above proof, we obtain  an $u_0\in W_{0,rad}^{1,\Phi}(B_R(0))$, with  $\underline{u}-k\leq u_0\leq\bar{u}-k$, and
$$I(u_0)=\min\{I(v)~/~v\in W_{0,rad}^{1,\Phi}(B_R(0))\},$$
where
$$W_{0,rad}^{1,\Phi}(B_R(0))=\{u\in W_0^{1,\Phi}(B_R(0))~/~ u~ \mbox{is symmetric radially}\}.
$$
So, by the principle of symmetric criticality  \cite{Palais}, we obtain that
$$I^{\prime}(u_0)=0~\mbox{and}~I(u_0)=\min_{u\in {W_0^{1,\Phi}(B_R(0))}}I(u).$$ 

In analogue way, we show that $u=u_0+k$ is a symmetric radially  solution for (\ref{P0}), with $\underline{u}\leq u\leq\bar{u}$.
\end{proof}
\fim

Now, let us emphasize the solution of problem
\begin{equation}\label{PBB}
 \left\{
\begin{array}{l}
\Delta_{\phi}v =cg(v)\ \mbox{in}\ B_L(0),\\
v\geq 0\ \mbox{in}\ B_L(0),\ \ v=k\ \mbox{on}\ \partial B_L(0),
\end{array}
\right.
\end{equation}
where $c,k, L>0$ are real constants given.

\begin{corollary}\label{32}
Suppose that $(\phi)_1$-$(\phi)_3$ hold. If  $g$ is a non-decreasing and continuous function on $[0,\infty)$ such that $g(t)>0$ for $t>0$, then there exists a $v(\vert x \vert)=v_{k,L}(\vert x \vert)\in W_{rad}^{1,\Phi}(B_L(0))\cap C^{1,\alpha}(\bar{B}_L(0))$, for some $0<\alpha<1$, satisfying:
\begin{enumerate}
\item [$(i)$] $0\leq v(0) \leq{v(|x|)} \leq k$, $v^{\prime}(0)=0$, and $v^{\prime}\geq 0$ on $[0, L]$,
\item [$(ii)$] $v_{k,L }\leq v_{k+1,L}$ and $v_{k,L}\geq v_{k,L+1}$ on $[0,L]$,
\item [$(iii)$] the inequalities
\begin{equation}\label{desige}
\eta_2^{-1}\left(\frac{c}{l_1}\right)\int_{v(0)}^r\frac{d\tau}{\Phi^{-1}({G}(v(0),\tau))}\leq r\leq \eta_1^{-1}\left(\frac{c}{m_1 N}\right) \int_{v(0)}^r\frac{d\tau}{\Phi^{-1}
({G}(v(0),\tau))}
\end{equation}
hold for all $0 \leq r \leq L$, where $\eta_1$ and $\eta_2$ were defined at Lemma $\ref{lema20}$ and 
$${G}(x,y):=\int_{x}^{y}{g}(t)dt~\mbox{for}~x,y\in \mathbb{R}~\mbox{with}~0\leq x <y.$$
\end{enumerate}
\end{corollary}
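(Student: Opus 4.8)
The plan is to construct $v_{k,L}$ by the radial sub--supersolution machinery of Corollary \ref{31}, and then to extract all three properties from the first integral (flux identity) of the associated ODE together with a two-sided energy estimate governed by $(\phi)_3$. First I would apply Corollary \ref{31} on $\Omega=B_L(0)$ to the ordered pair of constants $\underline{u}\equiv 0\le\bar u\equiv k$: the constant $k$ is a supersolution because $g(k)>0$, and $0$ is a subsolution, so there is a radial $v=v_{k,L}\in W_{rad}^{1,\Phi}(B_L(0))$ with $0\le v\le k$; since the truncated nonlinearity is bounded, Proposition \ref{pro31} upgrades $v$ to $C^{1,\alpha}(\bar B_L(0))$.

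Writing $v=v(r)$, $r=|x|$, the equation becomes $(r^{N-1}\phi(|v'|)v')'=c\,r^{N-1}g(v)$, and one integration on $[0,r]$ gives the flux identity
\[
r^{N-1}\phi(|v'(r)|)v'(r)=c\int_0^r s^{N-1}g(v(s))\,ds,
\]
the integration constant vanishing because $v\in C^1$. As the right-hand side is non-negative and $t\mapsto\phi(|t|)t$ is odd and increasing, we get $v'\ge 0$ on $[0,L]$, so with $h(t)=\phi(t)t$ the identity reads $r^{N-1}h(v'(r))=c\int_0^r s^{N-1}g(v(s))\,ds$. This forces $v(0)=\min v$ and $v'(0)=0$ (the right-hand side is $O(r^{N})$ near $0$), which together with $0\le v\le k$ is exactly item $(i)$.

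The heart of item $(iii)$ is a two-sided energy estimate for $\Theta(t):=h(t)t-\Phi(t)$, which by $(\phi)_3$ obeys $l_1\Phi\le\Theta\le m_1\Phi$ (indeed $\Theta'=h'(t)t$ and $(\phi)_3$ reads $l_1h\le h'(t)t\le m_1h$). Multiplying the radial equation $w'+\frac{N-1}{r}w=cg(v)$, with $w:=h(v')$, by $v'$ and integrating on $[0,r]$ — legitimate because the flux identity makes $w$, hence $v'=h^{-1}(w)$, of class $C^1$ on $\{v'>0\}$ — one gets $\int_0^r w'v'=\Theta(v'(r))$ and therefore
\[
\Theta(v'(r))=c\,G(v(0),v(r))-(N-1)\int_0^r\frac{w(s)v'(s)}{s}\,ds.
\]
Dropping the last (non-negative) term yields $\Theta(v'(r))\le cG$, while the pointwise bound $w(s)/s\le\frac{c}{N}g(v(s))$ — a direct consequence of the flux identity and the monotonicity of $g\circ v$ — gives $(N-1)\int_0^r wv'/s\le\frac{N-1}{N}cG$ and hence $\Theta(v'(r))\ge\frac{c}{N}G$. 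Comparing with $\Phi$ produces the pointwise sandwich $\frac{c}{m_1N}G(v(0),v(r))\le\Phi(v'(r))\le\frac{c}{l_1}G(v(0),v(r))$.

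Finally, applying $\Phi^{-1}$ and Lemma \ref{lema20}$(iii)$ (which extracts the factors $\eta_1,\eta_2$) converts the sandwich into $\eta_1(\frac{c}{m_1N})\Phi^{-1}(G)\le v'(r)\le\eta_2(\frac{c}{l_1})\Phi^{-1}(G)$; separating variables and integrating in $\tau=v(s)$ from $v(0)$ to $v(r)$ yields precisely the inequalities (\ref{desige}). Item $(ii)$ then follows from the comparison principle of Lemma \ref{lema21} with $B(x,t)=cg(t)$: on $B_L(0)$ the boundary data satisfy $k\le k+1$ and, by $(i)$, $v_{k,L+1}\le k$ on $\partial B_L(0)$, giving $v_{k,L}\le v_{k+1,L}$ and $v_{k,L}\ge v_{k,L+1}$. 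I expect the lower energy bound to be the main obstacle: controlling the weighted remainder $(N-1)\int_0^r wv'/s$ hinges on the auxiliary estimate $w(s)\le\frac{c}{N}s\,g(v(s))$, and one must also justify the energy identity under only $C^{1,\alpha}$ regularity by bootstrapping $v\in C^2$ on $\{v'>0\}$ from the flux identity and handling the degenerate set $\{v'=0\}$ separately.
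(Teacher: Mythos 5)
Your proposal is correct and follows the same overall route as the paper's proof: existence and regularity via Corollary \ref{31} and Proposition \ref{pro31} with the constant ordered pair $0\le k$; the flux identity $r^{N-1}h(v'(r))=c\int_0^r s^{N-1}g(v(s))\,ds$ giving $(i)$ (the paper derives it with the cutoff test functions $v_{r,\epsilon}$ and the limit $\epsilon\to0$ rather than by integrating the ODE directly, which is the technically safer step since $v$ is a priori only a weak solution); Lemma \ref{lema21} for $(ii)$; and, for $(iii)$, the same two-sided bound $\frac{c}{m_1N}G(v(0),v(r))\le\Phi(v'(r))\le\frac{c}{l_1}G(v(0),v(r))$ followed by Lemma \ref{lema20}$(iii)$ and separation of variables. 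The one genuine divergence is how $(\phi)_3$ enters. The paper writes $\int_0^r[\Phi'(v')]'v'\,dt=\int_0^r\Phi''(v')v'v''\,dt$ and applies $l_1\Phi'(t)\le\Phi''(t)t\le m_1\Phi'(t)$ pointwise under the integral; this tacitly multiplies that inequality by $v''$, whose sign is not controlled, so the step is delicate as written. Your variant — evaluating the integral exactly as $\Theta(v'(r))$ with $\Theta(t)=h(t)t-\Phi(t)$, and using the integrated inequality $l_1\Phi\le\Theta\le m_1\Phi$ obtained from $\Theta'(t)=h'(t)t$ and $(\phi)_3$ — reaches the same sandwich while bypassing any reference to the sign of $v''$, and is the cleaner argument. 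Finally, the regularity worry you flag at the end dissolves in your own framework: since $(\Theta\circ h^{-1})'(s)=h^{-1}(s)$, the identity $\frac{d}{dt}\Theta(v'(t))=v'(t)w'(t)$ requires only $w=h(v')\in C^1$, which the flux identity supplies, so no separate treatment of the degenerate set $\{v'=0\}$ or $C^2$ bootstrap is needed.
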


\noindent\begin{proof} By applying the Proposition \ref{pro31} and Corollary \ref{31} with $\underline{u} = 0$ and $\overline{u} = k$ as sub solution and super solution, respectively, we obtain a 
$v(\vert x \vert)=v_{k,L}(\vert x \vert)\in  W_{rad}^{1,\Phi}(B_L(0))\cap C^{1,\alpha}(\bar{B}_L(0))$ solution of (\ref{PBB}). Besides this, by using Lemma \ref{lema21}, we prove $(ii)$.  To show $(i)$ and $(iii)$, let us set
$$
v_{r,\epsilon}(t)=\left\{
\begin{array}{ll}
 1&\  \mbox{if}\ 0\leq t\leq r,\\
\mbox{linear}&\  \mbox{if}\ r\leq t\leq r+\epsilon,\\
0&\  \mbox{if}\  r+\epsilon \leq t \leq L,
\end{array}
\right.
$$
for each $\varepsilon>0$ given such that $0\leq r<r+\epsilon<L$. 

Now, by taking $\psi(x)=v_{r,\epsilon}(|x|)$ as a  test function, we have 
$$-\int_{B_L(0)}\phi(|\nabla v|)\nabla v\nabla\psi dx=c\int_{B_L(0)} {g}(v(x))\psi dx,$$
holds. So,
$$\frac{1}{\epsilon}\int_{A_{r,r+\epsilon}}\phi(|v'|)v' dx=c\int_{B_r(0)}{g}(v)dx+c\int_{A_{r,r+\epsilon}}{g}(v )v_{r,\epsilon}(|x|)dx,$$
where $A_{r,r+\epsilon}=\{x\in\mathbb{R}^N, r\leq|x|\leq r+\epsilon\}$. That is, 
$$\frac{1}{\epsilon}\int_r^{r+\epsilon}t^{N-1}\phi(|v'|)v'dt=c\int_0^rt^{N-1}g(v)dt+
c\int_r^{r+\epsilon}t^{N-1}g(v)v_{r,\epsilon}dt$$
and by taking $\epsilon\to0$, we obtain
\begin{equation}\label{hemeo1}
r^{N-1}\phi(|v'|)v'=c\int_0^rt^{N-1}g(v)dt\geq 0,~0<r<L,
\end{equation}
because $g$ is non-negative, that is, $v'\geq 0$ on $[0,L]$.

Besides this,   it follows from $(\phi)_1$ and $(\phi)_2$ that $h(s)=\phi(s)s$, $s>0$, is a $C^1$-increasing homeomorphism on $[0, \infty)$
such that $h(0)=0$. As $g$ is continuous,  it follows from (\ref{hemeo1}) that
\begin{equation}\label{e1}v^{\prime}\in C^1([0,R))~\mbox{and}~v^{\prime}(r)=h^{-1}\Big(cr^{1-N}\int_0^rt^{N-1} g(v)dt\Big),~0<r<L,
\end{equation}
and in particular, we have $v^{\prime}(0)=0$. This ends the proof of $(i)$. 

To prove $(iii)$, first we note that (\ref{hemeo1}) implies that
$$(\phi(v')v')'+\frac{N-1}{r}\phi(v')v'=cg(v)$$
is holds true for all $0<r<L$. As a consequence of these, we have that 
$$[\Phi'(v'(r))]'\leq cg(v(r)),~\mbox{for all}~0<r<L.$$

Besides this, by using (\ref{hemeo1}), $g$ , and $v$ increasing, we obtain 
\begin{eqnarray*}
[\Phi'(v'(r))]'&=&cg(v)-\frac{N-1}{r}\phi(v')v'\\
&\geq&cg(v)-\frac{N-1}{r}\left(\frac{cr}{N}g(v)\right)\\
&=&\frac{c}{N}g(v(r)),~\mbox{for all}~0<r<L,
\end{eqnarray*}
that is,
$$\frac{1}{N}cg(v(r))\leq [\Phi'(v'(r))]'\leq cg(v(r)),~0<r<L.$$

Now, by using of $v^{\prime}(0)=0$ and $(\phi)_3$, we have 
$$\begin{array}{ccl}
  \displaystyle\frac{c}{N}\int_{v(0)}^{v(r)}g(t)dt&\leq&\displaystyle\int_0^r[\Phi'(v'(t))]'v'(t)dt
\displaystyle=\displaystyle\int_0^r\Phi''(v'(t))v'(t)v''(t)dt\\
\displaystyle &\leq&\displaystyle m_1\int_0^r[\Phi(v'(t))]'dt
=m_1\Phi(v'(r)),~0<r<L
  \end{array}
$$
and, in similar way,
 $$c\int_{v(0)}^{v(r)}g(t)dt\geq l_1 \Phi(v'(r)),~0<r<L,$$
that is,
\begin{equation}\label{imp}
\frac{c}{m_1 N}\int_{v(0)}^{v(r)}g(t)dt\leq \Phi(v'(r))\leq\frac{c}{l_1}\int_{v(0)}^{v(r)}g(t)dt,~0<r<L.
\end{equation}

Now by using the definition of $G(x,y)$, 
we can rewrite (\ref{imp}) as
$$
\Phi^{-1}\Big(\frac{c}{m_1 N}G(v(0),v(r))\Big)\leq v'(r)\leq \Phi^{-1}\Big(\frac{c}{l_1}G(v(0),v(r))\Big),~0<r \leq L,
$$
to obtain
$$\int_{v(0)}^{v(r)}\frac{d\tau}{\Phi^{-1}\big(\frac{c}{l_1}G(v(0),\tau)\big)}\leq r\leq\int_{v(0)}^{v(r)}\frac{d\tau}{\Phi^{-1}
\big(\frac{c}{m_1 N}G(v(0),\tau)\big)},\ \ 0<r\leq L.$$
So, by using the Lemma \ref{lema20}, we obtain (\ref{desige}). This completes the proof of the Corollary.
\end{proof}
\fim

\section{On bounded domain}

Before proving Theorems \ref{teo}, we need of the next result to help us to control a sequence of solutions of an  approximate problem. 
\begin{lemma}\label{teo0}
 Assume that $(\phi)_1-(\phi)_3$ hold. If $\Omega=B_R(0)$,  $a(x)$ is a positive 
symmetric radially function  and $f$ satisfies $(\underline{f})$ and $\bf{(F)}$, then Problem $(\ref{P})$ admits at least one symmetric radially 
solution $u(\vert x \vert) \in C^1(B_R(0))$  such that $u^{\prime}(0)=0$ and $u^{\prime}\geq 0$ for $0<r<R$. 
\end{lemma}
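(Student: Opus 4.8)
The plan is to construct a solution on the ball $B_R(0)$ as a monotone limit of the finite boundary-data solutions $v_{k,L}$ supplied by Corollary~\ref{32}, taking $L=R$ fixed and letting the boundary height $k\to\infty$. First I would apply Corollary~\ref{32} with $c$ a lower bound for the radial weight $a$ (more precisely, since $a$ is positive and radial on $B_R(0)$, I would invoke the sub/super solution machinery directly with $g(x,u)=a(|x|)f(u)$, using $\underline u=0$ and $\overline u=k$) to obtain, for each $k\in\mathbb{N}$, a radial solution $u_k=v_{k,R}\in W^{1,\Phi}_{rad}(B_R(0))\cap C^{1,\alpha}(\overline{B_R(0)})$ of $(\ref{P})$ with boundary value $k$, satisfying $u_k'\ge 0$, $u_k'(0)=0$, and the monotonicity $u_k\le u_{k+1}$ from item $(ii)$. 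Thus $\{u_k\}$ is pointwise nondecreasing in $k$, and I set $u(x)=\lim_{k\to\infty}u_k(x)$.

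The central task is to show this limit is finite on the interior of $B_R(0)$ and blows up at $\partial B_R(0)$; this is where the $\phi$-Keller-Osserman condition $\bf{(F)}$ enters and is the main obstacle. The key estimate is the two-sided inequality $(\ref{desige})$ from Corollary~\ref{32}, which after rearrangement controls $r$ in terms of $\int_{u_k(0)}^{u_k(r)} d\tau/\Phi^{-1}(G(u_k(0),\tau))$. Using the left inequality of $(\ref{desige})$ at $r=R$ together with $\bf{(F)}$ --- which guarantees $\int_1^{+\infty} dt/\Phi^{-1}(F(t))<+\infty$ --- I would derive a uniform-in-$k$ upper bound on the central value $u_k(0)$: if the central values were to escape to infinity the convergent Keller-Osserman integral would force $R$ to shrink to zero, a contradiction. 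The hypothesis $(\underline f)$ is used here to compare $G(u_k(0),\tau)=\int_{u_k(0)}^\tau a\cdot(\text{something involving }f)$ with $F(\tau)$ up to a multiplicative constant, so that the abstract integral $\int d\tau/\Phi^{-1}(G)$ inherits convergence from $\bf{(F)}$; this comparison is delicate precisely because $f$ is not assumed monotone and $\Phi^{-1}$ is not homogeneous, so Lemma~\ref{lema20}$(iii)$ and Lemma~\ref{lema51} must be used to absorb the constants. Consequently $u_k(0)\le C$ uniformly, and then the right inequality of $(\ref{desige})$ yields, for each fixed $r<R$, a uniform bound $u_k(r)\le C(r)<\infty$, so the limit $u$ is finite and locally bounded on $B_R(0)$.

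Having the local uniform bounds, I would next upgrade pointwise convergence to $C^1_{loc}$ convergence: on any compact $\overline{B_\rho(0)}$ with $\rho<R$ the right-hand sides $a(|x|)f(u_k)$ are uniformly bounded, so the $C^{1,\alpha}$ regularity from Proposition~\ref{pro31} together with the explicit formula $(\ref{e1})$ for $u_k'$ gives equicontinuity of $u_k$ and $u_k'$; Arzel\`a--Ascoli then extracts a subsequence converging in $C^1_{loc}(B_R(0))$, and by monotonicity the whole sequence converges. The limit $u$ is radial, nondecreasing, satisfies $u'(0)=0$, inherits the weak formulation of $(\ref{P})$ by passing to the limit in the integral identity (using dominated convergence on each ball $B_\rho(0)$), and is therefore a $C^1$ solution on $B_R(0)$.

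Finally I would verify the boundary blow-up $u=\infty$ on $\partial B_R(0)$. Since $u_k\ge v_{k,R}$ equals $k$ on $\partial B_R(0)$ and the family is increasing, for any $x_0$ near the boundary and any height $M$ one uses the right inequality of $(\ref{desige})$ in reverse: as $r\to R$ the integral $\int_{u_k(0)}^{u_k(r)} d\tau/\Phi^{-1}(G(u_k(0),\tau))$ approaches a value bounded below by a positive constant, forcing $u_k(r)$, and hence $u(r)$, to exceed $M$ once $r$ is close enough to $R$; letting $M\to\infty$ gives $u(x)\to\infty$ as $d(x)\to 0$. The subtle point throughout is that no monotonicity of $f$ is available, so every appearance of $f$ inside $G$ must be handled through $(\underline f)$ to bound it below by a constant multiple of $F$, and the non-homogeneity of $\Phi^{-1}$ must be tracked via the $\eta_i$ estimates of Lemma~\ref{lema20}; this bookkeeping, rather than any single conceptual step, is the heart of the argument.
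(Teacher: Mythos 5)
Your scaffolding (monotone approximations $u_k$ with boundary data $k$, a Keller--Osserman contradiction for interior bounds, $C^1_{loc}$ compactness, blow-up at $\partial B_R(0)$ from $u\ge u_k$) matches the paper's, but there is a genuine gap at your central step: you apply the two-sided estimate (\ref{desige}) directly to the solutions $u_k$ of $\Delta_\phi u = a(|x|)f(u)$. Corollary \ref{32} establishes (\ref{desige}) only for the autonomous, constant-coefficient problem $\Delta_\phi v = c\,g(v)$ with $g$ non-decreasing, and the inequality your argument actually needs (the one with $\eta_1^{-1}$, bounding $r$ \emph{above} by the integral --- note it is this one, not the ``left'' inequality, that keeps the central values bounded) is derived from the pointwise lower bound $[\Phi'(v'(r))]' \ge \frac{c}{N}\,g(v(r))$. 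That bound rests on
\[
\phi(v'(r))v'(r) = c\,r^{1-N}\int_0^r t^{N-1} g(v(t))\,dt \;\le\; \frac{cr}{N}\, g(v(r)),
\]
which uses exactly that $c$ is constant and that $t \mapsto g(v(t))$ is non-decreasing ($g$ monotone and $v'\ge 0$). For your $u_k$ the right-hand side of the equation is $a(t)f(u_k(t))$ with $a$ merely positive, radial and continuous (no monotonicity in $r$) and $f$ not assumed monotone, so all one gets is $[\Phi'(u_k')]' \ge a(r)f(u_k(r)) - \frac{N-1}{N}\sup_{[0,r]}\bigl(a\, f(u_k)\bigr)$, whose right-hand side has no sign (take $a$ with a large spike near the origin). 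Hence (\ref{desige}) is simply not available for $u_k$, and both your uniform bound on $u_k(0)$ and the interior bounds $u_k(r)\le C(r)$ are unjustified. Your appeal to $(\underline{f})$ and Lemmas \ref{lema20}, \ref{lema51} repairs only the comparison of $G$ with $F$ inside the Keller--Osserman integral; it cannot repair this pointwise differential inequality, which is where monotonicity and constancy of the coefficient enter. (The same objection applies to citing Corollary \ref{32}(ii) for $u_k\le u_{k+1}$: for the non-autonomous problem that monotonicity must come from the comparison Lemma \ref{lema21}.)

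The paper closes precisely this gap by localization plus comparison, and this is the step missing from your proposal: fix $x_0\in B_R(0)$, set $L=R-|x_0|$ and $a_\infty=\min_{\overline{B}_L(x_0)} a>0$, and use Corollary \ref{32} to solve the \emph{constant-coefficient} problem $\Delta_\phi v_k = a_\infty f(v_k)$ in $B_L(x_0)$ with $v_k=k$ on $\partial B_L(x_0)$; since $a_\infty f(s)\le a(x)f(s)$, Lemma \ref{lema21} gives $u_k\le v_k$ in $B_L(x_0)$, and the Keller--Osserman contradiction (the splitting estimate (\ref{desiga}) combined with $\bf{(F)}$) is run on the $v_k$ alone, for which (\ref{desige}) is legitimate. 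This yields $\lim_k u_k(x_0)\le \lim_k v_k(x_0)<\infty$ at every interior point simultaneously, so no propagation of bounds from the center to other radii is needed at all. If you want to keep your outline, you must insert this comparison step (or prove a version of (\ref{desige}) for non-constant, non-monotone data, which fails in that generality); with it, your compactness and boundary-blow-up arguments go through essentially as in the paper.
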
 

\noindent\begin{proof} By  applying Corollary \ref{31}, with $\underline{u}= 0$ and  $\overline{u}= k-1$, we obtain an $u_k\in  W_{rad}^{1,\Phi}(B_R(0))\cap C^{1,\alpha}(\bar{B}_R(0))$, for some $\alpha \in (0,1)$, satisfying
\begin{equation}\label{PB2}
 \left\{
\begin{array}{c}
\Delta_{\phi}u_k=a(\vert x \vert){f}(u_k)\ \mbox{in}\ B_R(0),\\
u_k\geq 0\ \mbox{in}\ B_R(0),\ \ u_k=k-1\ \mbox{on}\ \partial B_R(0),
\end{array}
\right.
\end{equation}
for each $k \in \mathbb{N}$ with $k\geq 2$ given. Besides this, it follows from Lemma \ref{lema21}, that
\begin{eqnarray}
\label{desb}
0\leq u_1 \leq u_2 \leq \cdots \leq u_k \leq \cdots~\mbox{in}~\overline{B}_R(0),
\end{eqnarray}
and a consequence of this, it  there exists 
$$0\leq u(x):=\lim_{k\to \infty} u_k(x)\leq \infty,~\mbox{for each}~x \in B_R(0).$$

In the following, let us show that $0\leq u(x)<\infty$, for each $x \in B_R(0)$, and it is a solution of (\ref{P}). To do this, given an  $x_0 \in B_R(0)$, let  us  denote by  $a_{\infty}=\min\{a(t)~/~0 \leq t = \vert x-x_0 \vert\leq L\}>0$ and apply Corollary \ref{32} with $L=R-\vert x_0 \vert>0$, to obtain a 
$v_k(\vert x - x_0\vert)\in  W_{rad}^{1,\Phi}(B_L(0))\cap C^{1,\alpha}(\bar{B}_L(0))$ that satisfies
\begin{equation}\label{PB1}
 \left\{
\begin{array}{c}
\Delta_{\phi}v_k=a_{\infty}
{f}(v_k)\ \mbox{in}\ B_L(0),\\
v_k\geq 0\ \mbox{in}\ B_L(0),\ \ v_k=k\ \mbox{on}\ \partial B_L(0),
\end{array}
\right.
\end{equation}
for each $k \in \mathbb{N}$ given. As another consequence of Corollary \ref{32} and Lemma \ref{lema21}, we have
\begin{eqnarray}
\label{desbb}
0 \leq u_1 \leq u_2 \leq \cdots \leq u_k \leq v_k \leq v_{k+1}\leq \cdots~\mbox{in}~\overline{B}_R(0),
\end{eqnarray}
that is, there exists $$0 \leq v_{x_0,L}:=\lim_{k\to\infty}v_k(x_0)\leq\infty.$$

We claim that $v_{x_0,L}<\infty.$ In fact, assume that there exists a $k_0$ such that $v_{k_0}(x_0)>0$ (on the contrary, we have nothing to do).  Let us assume $x_0=0$ to simplify our reasoning. So, given $M>1$ and denoting by $F(x,y)=F(y) - F(x)$ for $x,y\in \mathbb{R}$, with $F$ defined at (\ref{deff}), it follows from Lemma \ref{lema20} and  ${\Phi}^{-1}$  non-decreasing, that 
\begin{equation}\label{desiga}\begin{array}{ccl}
  \displaystyle\int_{v_k(0)}^{\infty}\frac{d\tau}{\Phi^{-1}
({F}(v_k(0),\tau))}&\leq&\displaystyle\int_{0}^{\infty}\frac{d\tau}{\Phi^{-1}
({F}(v_k(0),v_k(0)+\tau ))}\\
\displaystyle &\leq&\displaystyle \int_{0}^{1}\frac{d\tau}{\Phi^{-1}
({F}(v_k(0),v_k(0)+\tau ))} + \int_{1}^{M}\frac{d\tau}{\Phi^{-1}
({F}(v_k(0),v_k(0)+\tau ))}\\
&+ & \displaystyle\int_{M}^{\infty}\frac{d\tau}{\Phi^{-1}
({F}(v_k(0),v_k(0)+\tau ))}\\
& \leq& \displaystyle\frac{1}{{\Phi^{-1}
({f}(v_k(0)))}}  \int_{0}^{1}\frac{d\tau}{\tau^{1/l}}+ \displaystyle\frac{1}{{\Phi^{-1}
({f}(v_k(0) ))}}  \int_{1}^{M}\frac{d\tau}{\tau^{1/m}}\\
&+& \displaystyle\int_{M}^{\infty}\frac{d\tau}{\Phi^{-1}
({F}(0,\tau))}\\
&=& \displaystyle\frac{1}{{\Phi^{-1}
({f}(v_k(0)))}}\Big[ \frac{l}{l-1} - \frac{m}{m-1} + \frac{m}{m-1} M^{\frac{m-1}{m}}\Big]\\
&+& \displaystyle\int_{M}^{\infty}\frac{d\tau}{\Phi^{-1}({F}(\tau ))},
  \end{array}
\end{equation}
where we used in the fourth inequality that
$${F}(v_k(0),v_k(0)+\tau )\geq {F}(0,\tau )={F}(\tau )~\mbox{and}~{F}(v_k(0),v_k(0)+\tau )\geq {f}(v_k(0) )\tau~\mbox{for all}~\tau \geq 0,$$
because ${f}$ is non-decreasing.

Now, supposing by contradiction that $v_{x_0,L} = \infty$, it follows from (\ref{desige}) and (\ref{desiga}), that
$$\begin{array}{ccl}
0<L&\leq& \displaystyle\eta_1^{-1}\left(\frac{a_\infty}{m_1 N}\right) \lim_{k\to \infty}\int_{v_k(0)}^k\frac{d\tau}{\Phi^{-1}
({F}(v_k(0),\tau))} \\
& \leq & \displaystyle \eta_1^{-1}\left(\frac{a_\infty}{m_1 N}\right) \lim_{k\to \infty}\int_{v_k(0)}^{\infty}\frac{d\tau}{\Phi^{-1}
({F}(v_k(0),\tau))} \leq 
\displaystyle\eta_1^{-1}\left(\frac{a_\infty}{m_1 N}\right)\int_{M}^{\infty}\frac{d\tau}{\Phi^{-1}
({F}(\tau ))}.
\end{array}
$$
for all $M>1$ given. So, doing $M \to \infty$, we obtain a contradiction by using the hypothesis $\bf{(F)}$.

So, we showed that $u(x)=\lim_{k\to\infty}u_k(x)$ for each $x \in B_R(0)$ is well defined and by using standard arguments 
we are able to prove that
 $u\in C^{1}({B}_R(0))$ is a symmetric radially solution for (\ref{P}). Besides this, by using
similar arguments as those used to show that $v^{\prime}\geq 0$ in Corollary \ref{32}, we can show that $u^{\prime}\geq 0$
as well.  These complete the proof of Theorem \ref{teo0}.
\end{proof}
\fim
\smallskip

Now, we prove the Theorem \ref{teo}, by using the Lemma \ref{teo0}.
\smallskip

\noindent\begin{proof} $of$ ($i$): By applying the Proposition \ref{pro31}, we get a $u_k\in C^{1,\alpha}(\bar{\Omega})$ for some $0<\alpha<1$ that satisfies $u_k\leq u_{k+1}$ and
\begin{equation}\label{k}
 \left\{
\begin{array}{c}
 \Delta_\phi u_k=a(x)f(u_k)\ \mbox{in}\ \Omega,\\
u_k{\geq}0\ \mbox{in}\ \Omega,\ u_k=k\ \mbox{on}\ \partial\Omega,
\end{array}
\right.
\end{equation}
for all $k \in \mathbb{N}$ given. Now, given a $x_0 \in \Omega$, let us consider two cases. 

First, assume $a(x_0)>0$. Then,  there exists a neighborhood  $V\subset\Omega$ of $x_0$ such that $a(x)>0$ for all $ x\in \bar V$, that is, $\mathfrak{m}=\min_{\bar{V}}a>0$. Besides this, it follows from the hypothesis ($\underline{f}$) that the  non-increasing function
\begin{equation}\label{def}
\underline{f}(t)=\inf\{f(s),\ s\geq t\},\ t\geq0
\end{equation}
 is well defined and fulfils: $0 \leq \underline{f}\leq f$, $\underline{f}(s)=0$ if, and only if $s=0$, satisfies the  $\phi\!\!-\!\!Keller\!\!-\!\!Osserman$ condition. In fact, let $\tau=\liminf_{t\to+\infty}\{\underline{f}(t)/f(t)\}$. It follows from ($\underline{f}$), there exists a $M>0$ such that $\underline{f}(t)\geq \tau f(t)$ for all $t>M$. So,  
$$\underline{F}(t)=\int_0^t\underline{f}(s)ds=\int_0^M\underline{f}(s)ds+\int_M^t\underline{f}(s)ds\geq\frac{\tau}{2}F(t), ~t \geq M,$$
that is, by  Lemma \ref{lema20}, we have
 $$\Phi^{-1}(\underline{F}(t))\geq \Phi^{-1}(\frac{\tau}{2}F(t))\geq \eta_1(\tau /2)\Phi^{-1}(F(t)), ~t \geq M.$$
These show our claim.

Now, let us consider the problem
\begin{equation}\label{V}
 \left\{
\begin{array}{c}
 \Delta_\phi v=\mathfrak{m}\underline{f}(v)\ \mbox{in}\ V,\\
v\geq 0\ \mbox{in}\ V,\ v=\infty\ \mbox{on}\ \partial V,
\end{array}
\right.
\end{equation}
where $\underline{f}$ is defined at (\ref{def}).
So, it follows from Lemmas \ref{teo0} and \ref{lema21} that, there exists a $v \in C^1(\Omega)$ solution of (\ref{V}) that satisfies   $0\leq u_k\leq u_{k+1}\leq v$ in $V$. 

Now, if  $x_0\in \Omega$ is such that $a(x_0)=0$, then it follows from $c_{\Omega}-positive$ hypothesis under $a$ that there exists a neighborhood $V\subset\Omega$ of $x_0$ such that
$a(x)>0$ for all $x\in\partial V$. By the compactness  of  $\partial V$, there exist open sets $V_i,\ i=1,\dots,n$ such that 
$$\partial V\subset \cup_{i=1}^nV_i\ \mbox{and}\ a(x)>0,~\mbox{for all}~ x\in V_i.$$

So by above arguments, we obtain that $u_k\leq v_i$ in $V_i$, where $v_i\in C^1(V_i)$ is a solution of
\begin{equation}\label{Vi}
 \left\{
\begin{array}{c}
 \Delta_\phi v=\mathfrak{m_i}\underline{f}(v)\ \mbox{in}\ V_i,\\
v\geq 0\ \mbox{in}\ V_i,\ v=\infty\ \mbox{on}\ \partial V_i,
\end{array}
\right.
\end{equation}
and $\mathfrak{m_i}=\min_{\bar{V}_i}a>0$. Hence,  there exists a real constant $C=C_V>0$ such that $0 \leq u_k\leq C$ in $\partial V$. Again, by the Lemma  \ref{lema21}, we obtain  $u_k\leq C$ in $V$. That is, in both cases, $u_k$ is bounded locally on $\Omega$. So, by standard arguments, we can show that $u_k\to u\in C^1(\Omega)$ and $u$ is a solution of ($\ref{P}$). 
\smallskip

\noindent $Proof$ $of$ ($ii$): Denoting by 
$$\mathfrak{M}=\max_{\overline{\Omega}}a>0~\mbox{and}~ \overline{f}(t)=\sup\{f(s),\ 0 \leq s \leq t\},\ t\geq0,$$
it follows from the hypotheses $(a)$ and $(\overline{f})$ that: $M>0$, $\overline{f}(0)=0$,  $\overline{f}(t)>0$ for $t>0$, $\overline{f}$
 is non-decreasing function, and $\overline{f}$ does not satisfy $\bf{(F)}$.

Now, assume by contradiction, that problem (\ref{P}) admits a solution $u\in C^1(\Omega)$. So, it follows from above informations that $u$ satisfies
\begin{equation}
 \left\{
\begin{array}{c}
 \Delta_{\phi}u= a(x)f(u) \leq \mathfrak{M}\overline{f}(u)\ \mbox{in}\ \Omega,\\
u\geq 0\ \mbox{in}\ \Omega,\ u=\infty\ \mbox{on}\ \partial \Omega.
\end{array}
\right.
\end{equation}

On the other hand, it follows from Proposition \ref{pro31}, that there exists a $u_k\in C^1(\overline{\Omega})$
satisfying
\begin{equation}
 \left\{
\begin{array}{l}
 \Delta_{\phi}u_k=\mathfrak{M} \overline{f}(u_k)\ \mbox{in}\ \Omega,\\
u_k \geq 0\ \mbox{in}\ \Omega,\ u_k=k\ \mbox{on}\ \partial \Omega,
\end{array}
\right.
\end{equation}
and
 $$0\leq u_1\leq u_2\leq \dots\leq u_k\leq \dots\leq u,$$
as a consequence of Lemma \ref{lema21}. So, there exists an $\omega\in C^1(\Omega)$ such that $u_k\to \omega$ in $C^1(\Omega)$, $0 \leq \omega \leq u$, and
\begin{equation}\label{116}
 \left\{
\begin{array}{c}
 \Delta_{\phi}\omega=\mathfrak{M} \overline{f}(\omega)\ \mbox{in}\ \Omega,\\
\omega\geq 0\ \mbox{in}\ \Omega,\ \omega=\infty\ \mbox{on}\ \partial \Omega.
\end{array}
\right.
\end{equation}

Finally, considering $R>0$ such that $\overline{\Omega} \subset \subset B_R(0)$, it follows from Corollary \ref{31} and (\ref{desige}) that there exists a $w_k \in C^1(\overline{B}_R(0))$ with $w^{\prime}_k\geq 0$ satisfying 
\begin{equation}\label{117}
 \left\{
\begin{array}{c}
 \Delta_{\phi}w_k=\mathfrak{M} \overline{f}(w_k)\ \mbox{in}\ B_R(0),\\
w_k \geq0\ \mbox{in}\ B_R(0),\ w_k=k\ \mbox{on}\ \partial B_R(0),
\end{array}
\right.
\end{equation}
and
$$
\eta_2^{-1}\left(\frac{\mathfrak{M}}{l_1}\right)\int_{w_k(0)}^k\frac{d\tau}{\Phi^{-1}(\overline{F}(\tau))}\leq\eta_2^{-1}\left(\frac{\mathfrak{M}}{l_1}\right)\int_{w_k(0)}^k\frac{d\tau}{\Phi^{-1}(\overline{F}(\tau)-\overline{F}(w_k(0)))}\leq R,
$$
for each $k>0$ given, where $\overline{F}(s)=\int_0^s \overline{f}(t)dt,~s\geq 0$. 

Since, $\overline{f}$ does not satisfies $\bf{(F)}$, there exists a $k_0>0$ such that $w_{k_0}(0)> \min\{\omega(x)~/~x\in \Omega\}$. This is impossible, because $w_{k_0} \leq \omega$ in $\Omega$, by using the Lemma \ref{lema21}. This completes the proof of Theorem \ref{teo}.
\end{proof}
\fim
 
\section{On  whole $\mathbb{R}^N$}

We begin this section with the below Lemma which is a radial version of Theorem \ref{coro53}. We emphasize that in it we do not require the hypotheses 
$\bf{(\mathcal{F})}$ and $(\ref{hipo0})$. So, let us consider
\begin{equation}
\label{prob22}
\left\{
\begin{array}{c}
 \Delta_{\phi}u=\rho(|x|)f(u)~\mbox{in}~ \mathbb{R}^N,\\
u>0~\mbox{in}~ \mathbb{R}^N,\ u(x)\stackrel{\left|x\right|\rightarrow \infty}{\longrightarrow} \infty,
\end{array}
\right.
\end{equation}
and state our below lemma.

\begin{lemma}\label{teo51}
 Assume that $(\phi)_1-(\phi)_3$ hold, $\rho$ is a non-negative continuous function 
satisfying  $\bf{(A_{\rho})}$, and $f$ is a non-decreasing function such that  $\bf{(F)}$ is not satisfied. Then $\mathbb{A_{\rho}}= (0,\infty)$, where $\mathbb{A_{\rho}}=\{\alpha>0~/~ (\ref{prob22})\ \mbox{has a radial solution with}\ u(0)=\alpha\}$. 
\end{lemma}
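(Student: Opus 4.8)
The plan is to fix an arbitrary $\alpha>0$ and produce a radial solution of $(\ref{prob22})$ with $u(0)=\alpha$ by the same exhaustion-by-balls device used in Lemma \ref{teo0}, but now run in the opposite regime: since $f$ \emph{fails} the $\phi$-Keller--Osserman condition $\bf{(F)}$, the finite-data solutions will no longer be trapped below a fixed barrier, and we must instead show they stay finite on every compact set and blow up at infinity. First I would apply Corollary \ref{32} on each ball $B_L(0)$ with $g=f$, $c=1$ is not quite right because the coefficient is $\rho(|x|)$; so I would instead invoke Corollary \ref{31} directly with sub/super solutions $\underline{u}=0$ and a constant $\overline{u}=k$ to obtain, for each $k\geq\alpha$ and each radius $L$, a radial solution $v_{k,L}$ of $\Delta_\phi v=\rho(|x|)f(v)$ on $B_L(0)$ with $v=k$ on $\partial B_L(0)$, $0\le v(0)\le v(|x|)\le k$, $v'(0)=0$, and $v'\ge 0$. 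The monotonicity statements $v_{k,L}\le v_{k+1,L}$ and $v_{k,L}\ge v_{k,L+1}$ from Corollary \ref{32}$(ii)$ (re-derived here via Lemma \ref{lema21}) give the two-parameter monotone family I need to extract limits.

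The core of the argument is the radial ODE identity. Reasoning exactly as in the derivation of $(\ref{hemeo1})$--$(\ref{e1})$ in Corollary \ref{32}, but carrying the weight $\rho$ inside the integral, I obtain
\begin{equation}\label{radode}
v'(r)=h^{-1}\!\Big(r^{1-N}\int_0^r t^{N-1}\rho(t)f(v(t))\,dt\Big),\quad 0<r<L.
\end{equation}
The plan is to prescribe $v(0)=\alpha$ and integrate $(\ref{radode})$ to get the lower bound that forces blow-up at infinity. Using $v\ge\alpha$, monotonicity of $f$ and of $h^{-1}$, and the definition $\mathcal{A}_\rho(r)=r^{1-N}\int_0^r t^{N-1}\rho(t)\,dt$ together with Lemma \ref{lema51}$(ii)$, I would estimate
$$
v(r)-\alpha=\int_0^r v'(s)\,ds\ \ge\ \int_0^r h^{-1}\big(f(\alpha)\,\mathcal{A}_\rho(s)\big)\,ds\ \ge\ \eta_3(f(\alpha))\int_0^r h^{-1}\big(\mathcal{A}_\rho(s)\big)\,ds.
$$
By hypothesis $\bf{(A_\rho)}$ the last integral diverges as $r\to\infty$, which is precisely what guarantees $u(x)\to\infty$ as $|x|\to\infty$ for the limiting solution. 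Conversely, the upper bound $v(r)\le v(L)=k$ together with the failure of $\bf{(F)}$ is what keeps $v(0)$ from being pushed above $\alpha$; concretely, running the Keller--Osserman-type inequality $(\ref{desige})$ backwards (as in the proof of Theorem \ref{teo}$(ii)$), the divergence of $\int^\infty d\tau/\Phi^{-1}(F(\tau))$ allows $v_{k,L}(0)$ to be made as large as $\alpha$ by choosing $L$ suitably, so that the normalization $u(0)=\alpha$ is attainable rather than obstructed by a finite ceiling $A<\infty$.

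The limit process is then standard. Fixing $\alpha$, I would select the radii and boundary data so that $v_{k,L}(0)$ is pinned at $\alpha$, pass $k\to\infty$ first (monotone increasing limit, controlled on compacta by $(\ref{radode})$ and local $C^{1,\alpha}$ estimates from the regularity in Proposition \ref{pro31}) and then $L\to\infty$, obtaining $u\in C^1(\mathbb{R}^N)$ radial, positive, with $u(0)=\alpha$, solving the equation and blowing up at infinity by the lower bound above; this shows $(0,\infty)\subseteq\mathbb{A_\rho}$, and the trivial inclusion gives equality. The main obstacle I anticipate is not the blow-up at infinity — that is handed to us cleanly by $\bf{(A_\rho)}$ — but rather the finiteness/normalization step: controlling the two-parameter family $v_{k,L}$ uniformly on compact sets so that the iterated limit exists and actually attains the prescribed value $u(0)=\alpha$ for \emph{every} $\alpha>0$. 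This is where the failure of $\bf{(F)}$ must be used in full strength (through the divergence of the Keller--Osserman integral in $(\ref{desige})$), and the lack of homogeneity of $h$ forces the estimates to be phrased through $\eta_3,\eta_4,\xi_3,\xi_4$ of Lemma \ref{lema51} rather than through explicit powers.
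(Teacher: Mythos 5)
Your blow-up-at-infinity estimate is exactly the paper's (the weighted radial identity $v'(r)=h^{-1}\big(r^{1-N}\int_0^r t^{N-1}\rho(t)f(v(t))\,dt\big)$, then $f$ non-decreasing, Lemma \ref{lema51}$(ii)$ and $\bf{(A_{\rho})}$ giving $u(r)\geq \alpha+\eta_3(f(\alpha))\int_0^r h^{-1}(\mathcal{A}_{\rho}(s))\,ds\to\infty$), but the construction that is supposed to produce a global radial solution with $u(0)=\alpha$ has two genuine gaps. First, your iterated limit is self-defeating: for fixed $L$ the family $v_{k,L}$ is nondecreasing in $k$, and its limit as $k\to\infty$ \emph{cannot} be finite, because if it were locally bounded the standard compactness argument would yield a solution of $\Delta_\phi v=\rho(|x|) f(v)$ with $v=\infty$ on $\partial B_L(0)$, which Theorem \ref{teo}$(ii)$ forbids (a non-decreasing $f$ automatically satisfies $(\overline{f})$, and $f$ fails $\bf{(F)}$ by hypothesis). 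So ``pass $k\to\infty$ first, then $L\to\infty$'' produces $+\infty$ everywhere: the failure of $\bf{(F)}$ works \emph{against} your exhaustion in $k$, not for it. Second, the normalization: you concede you must ``pin'' $v_{k,L}(0)=\alpha$, but monotone limits can never hit an exact prescribed value; you would need $k\mapsto v_{k,L}(0)$ to be continuous (a shooting/intermediate-value argument), and no such continuity is available here — Lemma \ref{lema21} gives monotonicity in $k$, but there is no uniqueness statement for these Dirichlet problems, and a monotone function of $k$ may jump across $\alpha$. This is not a removable technicality; it is the heart of proving $\mathbb{A_{\rho}}=(0,\infty)$ rather than merely $\mathbb{A_{\rho}}\neq\emptyset$.

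The paper avoids both problems by a different route: it solves the singular initial value problem $(r^{N-1}\phi(|u'|)u')'=r^{N-1}\rho(r)f(u)$, $u(0)=\alpha$, $u'(0)=0$ directly, quoting \cite{haitao} for local existence, so $u(0)=\alpha$ holds by construction; then, if the maximal existence radius $\Gamma(\alpha)$ were finite, ODE theory would force $u_\alpha(r)\to\infty$ as $r\to\Gamma(\alpha)^-$, making $u_\alpha(|x|)$ a boundary blow-up solution on $B_{\Gamma(\alpha)}(0)$ — impossible by Theorem \ref{teo}$(ii)$ since $f$ fails $\bf{(F)}$. Hence $\Gamma(\alpha)=\infty$, and the $\bf{(A_{\rho})}$ estimate (the one you wrote correctly) gives $u_\alpha(r)\to\infty$ as $r\to\infty$. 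Note the role reversal: in the paper the failure of $\bf{(F)}$ is used as a \emph{nonexistence} statement that forces \emph{global existence} of the shooting solution, whereas you tried to use it to keep an exhaustion finite and to reach the value $\alpha$. To repair your plan you would essentially have to import this shooting argument, at which point the two-parameter family $v_{k,L}$ becomes unnecessary.
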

\begin{proof}
 Given $\alpha>0$, consider the problem
\begin{equation}\label{a0}
 \left\{
\begin{array}{l}
 (r^{N-1}\phi(|u'|u'))'=r^{N-1}{\rho}(r)f(u(r)),\ \ r>0\\
u'(0)=0,\ \ u(0)=\alpha.
\end{array}
\right.
\end{equation}

Since ${\rho}$ and $f$ are continuous, we can follow the arguments in \cite{haitao}, to conclude that there exist a  $\Gamma(\alpha)>0$ (maximal extreme to the right for the existence interval of solutions for (\ref{a0}) and a  $u_\alpha\in C^2(0,\Gamma(\alpha))\cap C^1([0,\Gamma(\alpha)))$ solution of (\ref{a0}) on $(0,\Gamma(\alpha))$. If we had  $\Gamma(\alpha)<\infty$ for some $\alpha>0$, then we would have, by ordinary differential equations theory, that  $u_\alpha(r)\to\infty$ as $r\to\Gamma(\alpha)^-$.

So, $u_\alpha(|x|)$ would be a symmetric radially solution of the problem 
$$
\left\{
\begin{array}{c}
 \Delta_{\phi}u={\rho}(|x|)f(u)~\mbox{in}~ B_{\Gamma(\alpha)}(0),\\
u\geq0~\mbox{in}~ B_{\Gamma(\alpha)}(0),\ u=\infty~\mbox{on}~ \partial B_{\Gamma(\alpha)}(0),
\end{array}
\right.
$$
but this is impossible by Theorem \ref{teo}, because $f$ does not satisfy $\bf{(F)}$.

Besides this, it follows from $f$  non-decreasing,  Lemma \ref{lema51} and $\bf{(A_{\rho})}$, that   
$$
\begin{array}{ccl}
 u_\alpha(r)&\geq&\alpha+\displaystyle\int_0^rh^{-1}\Big(f(\alpha)s^{1-N}\int_0^st^{N-1}{\rho}(t)dt\Big)ds\\
&\geq&\alpha+\eta_3(f(\alpha))\displaystyle\int_0^rh^{-1}\Big(s^{1-N}\int_0^st^{N-1}{\rho}(t)dt\Big)ds\to\infty,~\mbox{as}~r\to\infty,
\end{array}
$$
that is, $u_\alpha(|x|)$, $x\in\mathbb{R}^N$ radial solution of 
$$
\left\{
\begin{array}{c}
 \Delta_{\phi}u=\rho(|x|)f(u)~\mbox{in}~ \mathbb{R}^N,\\
u\geq \alpha~\mbox{in}~ \mathbb{R}^N,\ u(x)\stackrel{\left|x\right|\rightarrow \infty}{\longrightarrow} \infty,
\end{array}
\right.
$$
with $u_\alpha(0)=\alpha$. That is, $\alpha \in \mathbb{A_{\rho}}$. This ends our proof.
\end{proof}
\fim
\smallskip

\noindent\begin{proof}{\it of Theorem $\ref{coro53}$.} Given $\beta>\alpha>0$, it follows from the Lemma \ref{teo51} that there exist positive radial solutions $u_\alpha$ and $u_\beta$  of the problems
$$
\left\{
\begin{array}{l}
 \Delta_{\phi}u=\overline{a}(|x|)f(u)\ \mathbb{R}^N,\\
u_\alpha(0)=\alpha,~ u(x)\stackrel{\left|x\right|\rightarrow \infty}{\longrightarrow} \infty,
\end{array}
\right.~~~~\mbox{and}~~~~\left\{
\begin{array}{l}
 \Delta_{\phi}u=\underline{a}(|x|)f(u)\ \mathbb{R}^N,\\
u_\beta(0)=\beta,~ u(x)\stackrel{\left|x\right|\rightarrow \infty}{\longrightarrow} \infty,
\end{array}
\right.
$$
respectively.

Besides this, it follows from $u_\alpha,~f$ non-decreasing, Lemma \ref{lema51}, and $\bf{(A_{\underline{a}})}$ that
$$
\begin{array}{lll}
 u_\alpha(r)&\leq&\alpha+\displaystyle\int_0^rh^{-1}(\mathcal{A}_{\overline{a}}(t))f(u_\alpha(t))dt\leq 2\displaystyle\eta_4(f(u_\alpha(r)))\int_0^r h^{-1}
(\mathcal{A}_{\overline{a}}(t))dt\\
&\leq&2\displaystyle(f(u_\alpha(r)))^{1/l_1}\int_0^r h^{-1}(\mathcal{A}_{\overline{a}}(t))dt,
\end{array}
$$
for all $r>0$ sufficiently large. That is, 
\begin{equation}
\label{estimativa}
u_\alpha(r)\leq \mathcal{F}^{-1}\Big(\int_0^r h^{-1}(\mathcal{A}_{\overline{a}}(t))dt\Big),~\mbox{for all}~r>>0.
\end{equation}

Now, setting 
$$0<S(\beta)=sup\{r>0 ~/~u_\alpha(r)<u_\beta(r)\} \leq \infty,$$
for $\alpha>0$ given, we claim that $S(\beta)=\infty$ for all $\beta>\alpha + \overline{H}$. In fact, by assuming this is not 
true, then there exists a $\beta_0>\alpha + \overline{H}$ such that $u_\alpha(S(\beta_0))=u_\beta(S(\beta_0))$. So, by using that $f$ is non-increasing and $u_{\alpha} \leq u_{\beta}$ on $[0,S(\beta_0)]$, we obtain that  
\begin{equation}
\label{wh}
\beta_0 \leq \alpha +\! \int_0^{S(\beta_0)}\!\!\Big[h^{-1}\Big(s^{1-N}\int_0^st^{N-1}\overline{a}(t)f(u_\alpha(t))dt\Big)-
h^{-1}\Big(s^{1-N}\int_0^s \!t^{N-1}\underline{a}(t)f(u_\alpha(t))dt\Big)\Big]ds
\end{equation}
holds.

On the other hands, it follows from $f$, $u_\alpha$  non-decreasing, (\ref{hipo0}), and Lemma \ref{lema51}, that
$$
\begin{array}{lll}
0 &\leq& \Big[h^{-1}\Big(s^{1-N}\int_0^st^{N-1}\overline{a}(t)f(u_\alpha(t))dt\Big)-
h^{-1}\Big(s^{1-N}\int_0^st^{N-1}\underline{a}(t)f(u_\alpha(t))dt\Big)\Big] \chi_{[0,S(\beta)]}(s)\\
&=&\Big[h^{-1}\Big(s^{1-N}\int_0^st^{N-1}[\overline{a}(t)-\underline{a}(t)]f(u_\alpha(t))dt+ h^{-1}\Big(s^{1-N}\int_0^st^{N-1}\underline{a}(t)f(u_\alpha(t))dt\Big)\Big)\\
&-& h^{-1}\Big(s^{1-N}\int_0^st^{N-1}\underline{a}(t)f(u_\alpha(t))dt\Big)\Big] \chi_{[0,S(\beta)]}(s)
\leq h^{-1}\Big(s^{1-N}\int_0^st^{N-1}[\overline{a}(t)-\underline{a}(t)]f(u_\alpha(t))dt\Big)\\
&\leq& h^{-1}(\mathcal{A}_{a_{osc}}(s)f(u_\alpha(s))  \leq \eta_4(\mathcal{A}_{a_{osc}}(s)) h^{-1} (f(u_\alpha(s)))\\

&\leq&\eta_4(\mathcal{A}_{a_{osc}}(s)) 

h^{-1}\Big( f\Big(

\mathcal{F}^{-1}\Big(\int_0^r h^{-1}(\mathcal{A}_{\overline{a}}(t))dt\Big)\Big)\Big):
=\mathcal{H}(s) ,~s\geq 0,
\end{array}
$$
where $\chi_{[0,S(\beta)]}$ stands for the characteristic function of $[0,S(\beta)]$.

So, it follows from the hypothesis  $\bf{(\mathcal{F})}$ and  (\ref{wh}), that 
$$\beta_0 \leq \alpha +\int_0^\infty \mathcal{H}(s)ds\leq \alpha+ \overline{H},$$
but this is impossible. 

Now, by setting $\beta=(\alpha + \epsilon) + \overline{H}$, for each $\alpha,\epsilon>0$ given, and considering the problem
\begin{equation}\label{Wn}
 \left\{
\begin{array}{l}
 \Delta_{\phi} u={a}(x)f(u)\ \mbox{in}\ B_n(0),\\
u\geq 0~\mbox{in}~B_n(0),~~u=u_\alpha\ \mbox{on}~ \partial B_n(0),
\end{array}
\right.
\end{equation}
we can infer from  Proposition \ref{pro31} that there exists a $w_n = w_{n,\alpha} \in C^{1,\nu}(\overline{B}_n)$, for some $0<\nu<1$, solution of  (\ref{Wn}) satisfying 
$0<\alpha\leq u_\alpha\leq w_n\leq u_\beta$ in $B_n$ for all $n \in \mathbb{N}$. So, by compactness, there exists a  
$w\in C^1(\mathbb{R}^N)$ such that $w(x)=\lim_{n\to\infty}w_n(x)$ is a  solution of (\ref{P}).\fim
\end{proof}
\smallskip

By adjusting the above arguments, we are able to prove the below remark, which generalises the main result in \cite{sayeb}. 
\begin{remark}
If we assume the stronger hypothesis
$$
 \tilde{H}:=\int_0^\infty[\eta_4(a^*(s))-\eta_3(a_*(s))]h^{-1}\Big (sf\Big(\mathcal{F}^{-1}\Big(\int_0^s
h^{-1}(\mathcal{A}_{a^{*}}(t))dt\Big)\Big)\Big)ds<\infty,
$$
instead of $(\mathcal{F})$,
we obtain the same results of Theorem $\ref{coro53}$ with $\tilde{H}$ in the place of $\overline{H}$, without assuming $(\ref{hipo0})$, where
$$a_*(r)=\min\{a(x);\ |x|\leq r\},~~  a^*(r)=\max\{a(x);\ |x|\leq r\},~r\geq 0.$$
\end{remark}
\smallskip

\noindent\begin{proof}{\it of Theorem $\ref{teo52}$.}  Assume that $w\in C^1(\mathbb{R}^N)$ is a positive solution of (\ref{P}). So, given  $0< \alpha<w(0)$, it follows by the arguments in the proof of Lemma \ref{teo51} that there exists a radial solution $u_\alpha \in C^1(B_{\Gamma(\alpha)}(0))$  of the problem
\begin{equation}\label{BPB}
\left\{
\begin{array}{c}
 \Delta_{\phi}u=\overline{a}(|x|)f(u)~\mbox{in}~ B_{\Gamma(\alpha)}(0),\\
u\geq0~\mbox{in}~ B_{\Gamma(\alpha)}(0),\ u=\infty~\mbox{on}~\partial B_{\Gamma(\alpha)}(0),
\end{array}
\right.
\end{equation}
if $\Gamma(\alpha)<\infty$. Yet, this is impossible by Lemma \ref{lema21}, that is, $\Gamma(\alpha)=\infty$. Besides this, it follows from $\bf{(A_{\underline{a}})}$  that $u_\alpha$ is a solution of the problem (\ref{prob2}), that is, $\alpha \in \mathbb{A}$. In particular, it follows from the above arguments that $(0,A)\subset\mathbb{A}$. 

Finally, if we assume that $f$ satisfies the $\phi-$Keller-Osserman condition, it follows from Theorem \ref{teo} that the problem 
$$
\left\{
\begin{array}{c}
 \Delta_{\phi}u=\overline{a}(|x|)f(u)~\mbox{in}~ B_{1}(0),\\
u\geq0~\mbox{in}~ B_{1}(0),\ u=\infty~\mbox{on}~\partial B_{1}(0),
\end{array}
\right.
$$
admits a solution. So, it follows from Lemma \ref{lema21} that $A<\infty$. Reciprocally, if $A<\infty$, then $u_{A+1}$ is a radial solution of 
$$
\left\{
\begin{array}{c}
 \Delta_{\phi}u=\overline{a}(|x|)f(u)~\mbox{in}~ B_{\Gamma(A+1)}(0),\\
u\geq0~\mbox{in}~ B_{\Gamma(A+1)}(0),\ u=\infty~\mbox{on}~\partial B_{\Gamma(A+1)}(0),
\end{array}
\right.
$$
where $0<\Gamma(A+1)<\infty$. So, by Theorem \ref{teo}, we have that $f$ satisfies $\bf{(F)}$. These ends the proof. \fim
\end{proof}

\begin{center}
\LARGE{Acknowledgement}
\end{center}

This paper was completed while the first author was visiting the Professor Haim Brezis at Rutgers University. He thanks to Professor Brezis by his incentive and hospitality. In this time, the second author was visiting the Professor Zhitao Zhang at Chinese Academy of Sciences. He is grateful by the invitation and 
hospitality as well.

\end{document}